\newtheorem{theorem}{Theorem}
\newtheorem{corollary}[theorem]{Corollary}
\newtheorem{lemma}[theorem]{Lemma}
\newtheorem{conjecture}[theorem]{Conjecture}
\newtheorem{observation}[theorem]{Observation}
\newcommand{\note}[1]{{\color{blue} \sf Note: [#1]}}
\newcommand{\vanish}[1]{}
\begin{document}
\title{Odd Prime Graph Labelings} 

\author{
Holly Carter\\
{\small Austin Peay State University}\\
{\small hcarter12@my.apsu.edu}\\
\\
N. Bradley Fox\\
{\small Austin Peay State University}\\
{\small foxb@apsu.edu}\\
}

\date{}
\maketitle

\begin{abstract}
An odd prime labeling is a variation of a prime labeling in which the vertices of a graph of order~$n$ are labeled with the distinct odd integers $1$ to $2n-1$ so that the labels of adjacent vertices are relatively prime. This paper investigates many different classes of graphs including disjoint unions of cycles, stacked prisms, and particular types of caterpillars, by using various methods to construct odd prime labelings. We also demonstrate progress toward proving a conjecture that all prime graphs have an odd prime labeling.
\end{abstract}

\section{Introduction}

Consider a simple graph $G$ of order $n$.    A \textit{prime labeling} of $G$ is an assignment of the integers $1$ to $n$ as labels of the vertices such that the labels of each adjacent pair of vertices are relatively prime. A graph with such a labeling is called \textit{prime}. This labeling was conceived by Entringer and first introduced by Tout, Dabboucy, and Howalla~\cite{Tout}. By changing the labeling set or altering the relatively prime condition, many variations of prime labelings have since been developed, such as minimum coprime labelings~\cite{A_F, A_F2, BDHMMM, Lee}, neighborhood-prime labelings~\cite{C_F, Pa_Sh}, and $k$-prime labelings~\cite{V_P}. An overview of results on prime labelings and these variations can be found in the survey by Gallian~\cite{Gallian}. 

One of the more recently studied variations is the odd prime labeling~\cite{P_S, Y_A}, which consists of labeling the vertices with the set of odd integers $\{1,3,\ldots, 2n-1\}$ such that adjacent pairs of vertices have relatively prime labels. We use the term \textit{odd prime} to describe graphs that can be labeled in this manner.  A function $\ell$ will often be used as notation to describe the labeling as an injective map from the vertices $V$ to the set $\{1,3,\ldots, 2n-1\}$.  To be odd prime, we need $\gcd(\ell(u),\ell(v))=1$ for all adjacent vertices $u$ and $v$.  The first papers to introduce this labeling demonstrated a variety of classes of graphs are odd prime, including all paths, cycles, ladders, wheels, and the class of generalized Petersen graphs $GP(n,2)$~\cite{P_S}, as well as helms and disjoint unions of two cycles~\cite{Y_A}. Furthermore, characterizations for when complete graphs and complete bipartite graphs are odd prime are given in~\cite{Y_A}.

In this paper we continue the exploration of which classes of graphs admit an odd prime labeling. We first introduce some lemmas and observations in Section~\ref{prelim} that will aid in developing our labelings or determining that an odd prime labeling cannot exist. In Section~\ref{cycles} we investigate a wide array of graphs created through combining cycles such as the disjoint union of cycles, snake graphs, book graphs, and stacked prisms.  Section~\ref{trees} shifts our focus to the large class of connected graphs without cycles known as trees, in which we introduce labelings for spiders, perfect binary trees, special cases of caterpillars, and firecrackers.  In Section~\ref{powers} we fully characterize when odd prime labelings exist for powers of paths and cycles.  Finally, Section~\ref{pvop} consists of progress towards proving the conjecture that all prime graphs are also odd prime.

\section{Preliminary Material}\label{prelim}

The following lemmas summarize a few results by Youssef and Almoreed~\cite{Y_A} involving properties of odd prime graphs.  

\begin{lemma}\label{subgraph}
If $G$ is an odd prime graph, then every spanning subgraph of $G$ is also odd prime.
\end{lemma}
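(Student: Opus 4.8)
The plan is to reuse the very labeling that witnesses $G$ being odd prime, observing that a spanning subgraph shares both the vertex set and the order of $G$. Concretely, let $H$ be a spanning subgraph of $G$, and suppose $\ell\colon V\to\{1,3,\ldots,2n-1\}$ is an odd prime labeling of $G$, where $n$ is the order of $G$. The crucial point I would emphasize is that, by definition of \emph{spanning} subgraph, $H$ has exactly the same vertex set $V$ as $G$, and therefore the same order $n$. Consequently the target label set $\{1,3,\ldots,2n-1\}$ is identical for $H$ and $G$, so $\ell$ is already a map of the correct type for $H$, and it remains injective on $V$.

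Next I would verify the relatively prime condition for $H$ using the edge containment $E(H)\subseteq E(G)$. For any pair of adjacent vertices $u,v$ in $H$, the edge $uv$ is also an edge of $G$; since $\ell$ is an odd prime labeling of $G$, we have $\gcd(\ell(u),\ell(v))=1$. As this holds for every edge of $H$, the map $\ell$ satisfies all the requirements of an odd prime labeling of $H$, and hence $H$ is odd prime.

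I do not anticipate any genuine obstacle here, since the argument is a direct restriction of an existing labeling to a smaller edge set. The only subtlety worth flagging explicitly is why the hypothesis \emph{spanning} is essential: removing edges never introduces a coprimality violation, but removing vertices would change the order to some $m<n$ and force the label set to shrink to $\{1,3,\ldots,2m-1\}$, so the inherited labels need not lie in the required range. Thus the same reasoning would fail for arbitrary (non-spanning) subgraphs, and I would make sure the write-up isolates the preservation of order as the one place the definition is used.
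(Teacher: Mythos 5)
Your proof is correct and is the standard argument: restrict the odd prime labeling of $G$ to the spanning subgraph, noting that the vertex set (and hence the label set $\{1,3,\ldots,2n-1\}$) is unchanged and every edge of the subgraph inherits the coprimality condition. The paper itself states this lemma without proof, citing Youssef and Almoreed, and your argument is exactly the one that justifies it, including the correct identification of where the \emph{spanning} hypothesis is needed.
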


We use $\beta(G)$ to denote the independence number of a graph $G$, defined as the size of the largest set of vertices that is independent, meaning no edge exists between any pair of vertices in said set. 

\begin{lemma}\label{ind req}
The independence number of any odd prime graph $G$ of order $n$ satisfies
$$\beta(G)\geq \left\lfloor\frac{n+1}{3}\right\rfloor.$$
\end{lemma}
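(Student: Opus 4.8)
The plan is to exhibit an explicit independent set of the required size directly from an odd prime labeling. The key observation is that if two labels share a common factor greater than $1$, then they fail to be relatively prime, so the corresponding vertices cannot be adjacent. Thus any collection of vertices whose labels are pairwise non-coprime automatically forms an independent set. The natural way to force such a collection is to group labels by a common small prime; here $3$ is the obvious choice, since all labels are odd (ruling out $2$), and among the remaining primes multiples of $3$ are the most plentiful inside $\{1,3,\ldots,2n-1\}$.

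First I would fix an odd prime labeling $\ell\colon V(G)\to\{1,3,\ldots,2n-1\}$, which exists by hypothesis. Let $S=\{v\in V(G) : 3\mid \ell(v)\}$ be the set of vertices whose labels are divisible by $3$. For any two distinct $u,v\in S$, both $\ell(u)$ and $\ell(v)$ are divisible by $3$, so $\gcd(\ell(u),\ell(v))\geq 3>1$; since $\ell$ is an odd prime labeling, $u$ and $v$ cannot be adjacent. Hence $S$ is an independent set, and consequently $\beta(G)\geq |S|$.

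It then remains to count $|S|$, i.e.\ the number of odd multiples of $3$ lying in $\{1,3,\ldots,2n-1\}$. These are precisely the integers congruent to $3\pmod 6$ that are at most $2n-1$, namely $3,9,15,\ldots$, and a short calculation shows there are exactly $\left\lfloor\frac{n+1}{3}\right\rfloor$ of them. Combining this with the previous paragraph yields $\beta(G)\geq\left\lfloor\frac{n+1}{3}\right\rfloor$, as desired.

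The argument carries no real obstacle beyond bookkeeping: the conceptual heart — that a shared prime factor forces independence — is immediate, and the only point requiring genuine care is the final count, where one must confirm that restricting to \emph{odd} multiples of $3$ (rather than all multiples of $3$) produces exactly the floor expression in the statement. Choosing the prime $3$ rather than a larger odd prime is what makes the resulting bound as strong as claimed, so I would not expect any sharper lower bound to emerge from this particular strategy.
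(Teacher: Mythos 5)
Your proof is correct and is essentially the same argument the paper intends for this lemma (which it quotes from Youssef and Almoreed): the vertices whose labels are odd multiples of $3$ pairwise share the factor $3$, hence form an independent set, and there are exactly $\left\lfloor\frac{n+1}{3}\right\rfloor$ integers congruent to $3\pmod 6$ in $\{1,3,\ldots,2n-1\}$. Your counting step checks out, so the argument stands as written.
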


The lower bound of the previous inequality corresponds to the number of multiples of $3$ in the sequence of odd labels $1,3,\ldots, 2n-1$. Therefore, it provides a useful condition to demonstrate a graph is not odd prime if one can show $\beta(G)<\lfloor\frac{n+1}{3}\rfloor$ since in this case, there would not be enough independent vertices to place those multiples of $3$.  This is analogous to the requirement for prime graphs in which $\beta(G)\geq \lfloor\frac{n}{2}\rfloor$ where this relates to the placement of even labels from the sequence $1,2,\ldots, n$.

The following properties of the $\gcd$ of two integers will be very useful in proving our labelings are odd prime. Labels that differ by a positive power of two will be particularly helpful in many of our upcoming results and hence will be used without citation at times.
\begin{observation}\label{obs}
For any non-zero integers $a$, $b$, and $t$ and any positive integer $k$, the following hold:
\begin{enumerate}[(i)]
    \item $\gcd(a,a+2^k)=1$ if $a$ is odd,
    \item $\gcd(a,b)=\gcd(a,a-b)$,
    \item $\gcd(a,b)=\gcd(a+tb,b)$.
\end{enumerate}
\end{observation}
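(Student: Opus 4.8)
The plan is to prove all three identities by the standard device of showing that the two quantities being compared have exactly the same set of common divisors, since two nonzero integers with identical sets of common divisors necessarily share the same greatest common divisor. Throughout I interpret $\gcd$ as the positive greatest common divisor, which is well defined because the integers involved are assumed nonzero; this makes the set-of-divisors comparisons clean.

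I would establish part (iii) first, as it is the most general and the others follow from it. Let $d$ be any common divisor of $a$ and $b$. Then $d$ divides every integer combination of $a$ and $b$, so in particular $d \mid a+tb$, showing $d$ is a common divisor of $a+tb$ and $b$. Conversely, if $d \mid a+tb$ and $d \mid b$, then $d \mid (a+tb)-tb = a$, so $d$ is a common divisor of $a$ and $b$. Hence the pairs $(a,b)$ and $(a+tb,b)$ have identical sets of common divisors, and taking the greatest element of each set yields $\gcd(a,b)=\gcd(a+tb,b)$. Part (ii) is then immediate: using $\gcd(x,y)=\gcd(y,x)$ and the case $t=-1$ of part (iii), one gets $\gcd(a,b)=\gcd(b,a)=\gcd(b-a,a)=\gcd(a,a-b)$, the last step using that gcd is insensitive to sign. (Alternatively, (ii) admits the same one-line direct argument, since a common divisor of $a$ and $b$ divides $a-b$, and a common divisor of $a$ and $a-b$ divides $a-(a-b)=b$.)

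For part (i), I would set $d=\gcd(a,a+2^{k})$ and extract two facts about $d$. First, $d$ divides the difference $(a+2^{k})-a=2^{k}$ (this is exactly the content of part (ii) applied with $b=a+2^k$, or just direct divisibility), so $d\mid 2^{k}$ and therefore $d$ is a power of two. Second, $d\mid a$ and $a$ is odd, so $d$ is odd. The only odd divisor of $2^{k}$ is $1$, which forces $d=1$, as claimed.

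There is no substantive obstacle here: these are elementary number-theoretic facts and the arguments are short. The single point deserving a moment of care is the parity step in part (i)—namely combining ``$d$ divides a power of two'' with ``$d$ divides an odd number'' to conclude $d=1$—together with the bookkeeping reminder that the hypotheses of nonzero integers and positive $k$ are what guarantee the gcds are well defined and the difference $2^k$ is genuinely positive.
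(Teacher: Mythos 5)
Your proof is correct. The paper states this Observation without any proof at all, treating these as standard elementary facts; your argument --- establishing (iii) by showing the pairs $(a,b)$ and $(a+tb,b)$ have the same set of common divisors, deriving (ii) from it, and for (i) combining ``$d \mid 2^k$'' with ``$d \mid a$ and $a$ odd'' to force $d=1$ --- is precisely the standard justification the authors implicitly rely on, so there is nothing to reconcile.
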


\section{Combinations of Cycles}\label{cycles}
We being our investigation of graphs with odd prime labelings by considering various ways of combining cycles.  First, we examine the disjoint union of cycles of any length, in which the disjoint union of two graphs is formed by taking the union of the vertex sets and the edge sets. It was shown in~\cite{Y_A} that the disjoint union $C_m\cup C_n$ is odd prime for any $m,n\geq 3$.  It is noteworthy that in the case of prime labelings, the disjoint union of cycles is not prime if both $m$ and $n$ are odd, which is a condition that does not carry over into odd prime labelings.  Furthermore, when generalized to the union of more than two cycles, Deretsky et al.~\cite{D_L_M} showed many cases of these disjoint unions to be prime if at most one odd cycle is included in the disjoint union.  However, it is ultimately still an open conjecture whether the disjoint union of any number of even cycles is always prime.  The following result proves the odd prime analog of this conjecture while also removing the restriction on the number of cycles being odd.  An example of a disjoint union of four cycles of various lengths with an odd prime labeling is shown in Figure~\ref{union of cycles}.

\begin{theorem}\label{unioncycles}
All disjoint unions of cycles, $\bigcup_{i=1}^n C_{k_i}$, are odd prime for any lengths $k_i$ and any number of cycles $n$.
\end{theorem}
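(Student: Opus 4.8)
The plan is to build the labeling cycle by cycle, assigning pairwise disjoint blocks of consecutive odd integers to the individual cycles and then reducing the whole theorem to one self-contained statement about a single cycle. Writing $N=\sum_{i=1}^n k_i$ for the total order, the available labels are exactly $1,3,\ldots,2N-1$. I would cut this list into consecutive blocks $B_i$ of sizes $k_1,\ldots,k_n$, so that $B_i=\{a_i,a_i+2,\ldots,a_i+2(k_i-1)\}$ with $a_i=2(k_1+\cdots+k_{i-1})+1$. Because a disjoint union imposes no edges between distinct cycles, it suffices to label each $C_{k_i}$ using the odd integers of $B_i$; the labelings never interact. Thus Theorem~\ref{unioncycles} reduces to the following \emph{block lemma}: for every odd $a$ and every $k\geq 3$, the $k$ consecutive odd integers $a,a+2,\ldots,a+2(k-1)$ can be arranged around a cycle with all adjacent labels relatively prime.

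To prove the block lemma I would start from the increasing arrangement $a,a+2,\ldots,b$ with $b=a+2(k-1)$. By Observation~\ref{obs}(i) any two consecutive terms differ by $2$ and are therefore coprime, so this is already a Hamiltonian \emph{path} with all consecutive labels relatively prime; only the wrap-around edge $\{a,b\}$ can fail, and indeed $\gcd(a,b)=\gcd(a,2(k-1))=\gcd(a,k-1)$, which may exceed $1$. The case $k=3$ needs no repair, since $\{a,a+2,a+4\}$ is an all-coprime triangle (the chord $\{a,a+4\}$ is coprime by Observation~\ref{obs}(i) with $2^2$). For $k\geq 4$ I would close the path into a cycle by a rotation: if there is a pivot index $j$ with $2\leq j\leq k-1$ such that $\gcd(a,j)=1$ and $\gcd(b,k-j)=1$, then
\[
a,\; a+2,\; \ldots,\; a+2(j-1),\; b,\; b-2,\; \ldots,\; a+2j,\; a
\]
is a Hamiltonian cycle with every adjacent pair coprime: each newly created adjacency either still differs by $2$, or is $\{a+2(j-1),b\}$, which by Observation~\ref{obs}(iii) reduces to $\gcd(b,k-j)$, or is $\{a+2j,a\}$, which reduces to $\gcd(a,j)$.

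The crux, and the step I expect to be hardest, is guaranteeing that a suitable pivot $j$ always exists. A clean simplification helps: since $a$ is odd, $\gcd(a,2^t)=1$ for every $t\geq 1$, so every pivot that is a power of two automatically satisfies $\gcd(a,j)=1$; running the same argument on the reversed path shows symmetrically that one may instead make $k-j$ a power of two and get $\gcd(b,k-j)=1$ for free. It is therefore enough to produce a power of two $2^t<k$ for which $k-2^t$ is relatively prime to $a$ or to $b$. I would finish by arguing that this always occurs: no single odd prime can divide all of $k-2,k-4,k-8,\ldots$, since $2^t$ has multiplicative order at least $2$ modulo any odd prime and hence $k-2^t$ runs through more than one residue, so the candidates cannot be uniformly obstructed by one prime. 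The real difficulty is that $a$ (and $b$) might share \emph{different} prime factors with different candidates, so completing the argument needs a short counting/CRT estimate together with the backup supply of non-power-of-two pivots, plus a direct check for the smallest $k$. This is the one genuinely delicate point; all of the graph-theoretic content is carried by the block decomposition and the rotation, so the theorem ultimately rests on this elementary residue argument.
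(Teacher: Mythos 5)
Your block decomposition is exactly right and matches the paper: since a disjoint union has no edges between cycles, cutting $1,3,\ldots,2N-1$ into consecutive blocks reduces the theorem to labeling a single cycle $C_k$ with $k$ consecutive odd integers. The gap is in your block lemma. The rotation construction itself is sound (the two new adjacencies do reduce to $\gcd(a,j)$ and $\gcd(b,k-j)$ by Observation~\ref{obs}), but the existence of a valid pivot $j$ is precisely what you never prove, and your own sketch shows why it is not routine: knowing that no single odd prime divides every candidate $k-2^t$ does not rule out \emph{distinct} prime factors of $b$ obstructing \emph{distinct} candidates, and $b$ can certainly have several prime factors for which $2$ has small multiplicative order (for $p=3,7,5$ the solutions of $2^t\equiv k \pmod p$ form arithmetic progressions in $t$ of differences $2$, $3$, $4$ respectively, so three such primes can thin the candidate list very quickly, while the list itself has only about $\log_2 k$ entries). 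The ``counting/CRT estimate'' you defer, together with the unspecified ``backup supply'' of non-power-of-two pivots and the unchecked small cases, is the entire content of the lemma; as written the proof is incomplete, which you candidly acknowledge.

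The paper avoids this number theory altogether by never using the monotone arrangement in the first place. Instead of placing $a, a+2, \ldots, b$ in increasing order around the cycle (which creates one bad wrap-around edge that must then be repaired), it labels the cycle in a zigzag: assign the labels $m, m+2, m+4, \ldots$ to the vertices $v_1, v_k, v_2, v_{k-1}, v_3, v_{k-2}, \ldots$, alternating between the two ends of the cycle. Traversing the cycle, the labels then increase by $4$ along one arc and decrease by $4$ along the other, with two connecting edges of difference $2$; every adjacent pair of labels differs by $2$ or $4$, so Observation~\ref{obs}(i) finishes immediately, with no exceptional edge, no pivot, and no case analysis beyond the parity of $k$. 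If you want to salvage your write-up, the cleanest fix is to replace your monotone path by this alternating arrangement before closing it into a cycle; your block decomposition and coprimality bookkeeping can then stand as is.
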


\begin{proof}
We will label the cycles sequentially from $C_{k_1}$ to $C_{k_n}$. Given a cycle $C_k$ consider the vertices in clockwise order as $v_1, v_2, \ldots , v_k$. Let $m$ be the smallest unused odd label.  We create our labeling~$\ell$ starting with $\ell(v_1)$ in a pattern in which we alternate between the clockwise and counter-clockwise paths around the cycle.

If $k$ is even, label the sequence of vertices $$v_1, v_k, v_2, v_{k-1}, v_3, v_{k-2},\ldots, v_{\frac{k}{2}}, v_{\frac{k}{2}+1}$$ with the labels $m, m+2,\ldots , m+2k-2$, respectively. The vertices on edges $v_1v_k$ and $v_{\frac{k}{2}}v_{\frac{k}{2}+1}$ will be labeled with consecutive odd integers. All of the other adjacent labels will differ by~$4$.

If $k$ is odd, we similarly label the sequence of vertices  $$v_1, v_k, v_2, v_{k-1}, v_3, v_{k-2},\ldots , v_{\frac{k+1}{2}-1}, v_{\frac{k+1}{2}+1}, v_{\frac{k+1}{2}}$$ with $m, m+2,\ldots , m+2n-2$, respectively. The labels of the adjacent vertices again differ by $2$ or $4$.  Then by Observation~\ref{obs} $(i)$, adjacent labels in both cases are relatively prime. Thus all disjoint unions of cycles, $\bigcup_{i=1}^n C_{k_i}$, are odd prime for any lengths $k_i$ and any number of cycles $n$.
\end{proof}

\begin{figure}[t]
\begin{center}
\includegraphics[width=14 cm]{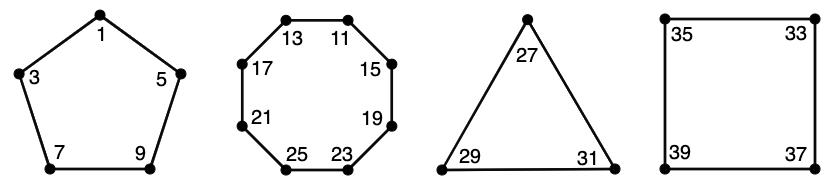}
\caption{An odd prime labeling of a disjoint union of cycles $C_5\cup C_8\cup C_3 \cup C_4$} \label{union of cycles} 
\end{center}
\end{figure}

We will now consider snake graphs, which are denoted as $S_{k,n}$ where $k$ represents the length of the cycles attached along a path of $n$ vertices.  Figure~\ref{snake} includes an odd prime labeling of a snake graph with hexagon cycles. The vertices along the path will be represented as $v_i$ with $i=1$ to $n$. The cycles are formed by adjoining paths $v_i, w_{i,1},w_{i,2},\ldots, w_{i,k-2},v_{i+1}$ for each $i=1$ to $n-1$.  Snake graphs were indirectly shown to be prime in~\cite{S_Y} where a related graph called the star$(m,n)$-gon was proven to be prime.  We now show the analogous is true for them being odd prime.

\begin{theorem}
Snake graphs $S_{k,n}$ are odd prime for any $k\geq 3$ and $n\geq 2$.
\end{theorem}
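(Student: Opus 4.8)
The plan is to thread a single increasing run of odd labels through the whole graph, in the same spirit as the proof of Theorem~\ref{unioncycles} but without even needing the alternating trick. Traverse the snake in the order
$v_1, w_{1,1},\ldots,w_{1,k-2}, v_2, w_{2,1},\ldots,w_{2,k-2}, v_3, \ldots, v_{n-1}, w_{n-1,1},\ldots,w_{n-1,k-2}, v_n$,
and assign the labels $1,3,5,\ldots,2N-1$ in this order, where $N = n+(n-1)(k-2)$ is the order of $S_{k,n}$. Since each path vertex is followed by the $k-2$ interior vertices of one attached path before the next path vertex is reached, this yields the closed forms $\ell(v_i)=1+2(i-1)(k-1)$ and $\ell(w_{i,j})=\ell(v_i)+2j$. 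The traversal visits each of the $N$ vertices exactly once, so $\ell$ is automatically a bijection onto $\{1,3,\ldots,2N-1\}$.

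First I would dispatch the easy edges. Every edge lying on one of the attached paths, namely $v_iw_{i,1}$, $w_{i,j}w_{i,j+1}$, and $w_{i,k-2}v_{i+1}$, joins two labels that are consecutive in the traversal and hence differ by exactly $2$; since all labels are odd, Observation~\ref{obs}$(i)$ makes these pairs relatively prime.

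The only remaining edges are the path edges $v_iv_{i+1}$, and these are the crux of the argument, since their endpoints differ by $2(k-1)$ rather than by a power of two. Here I would exploit the congruence $\ell(v_i)\equiv 1 \pmod{k-1}$, which is immediate from the closed form because $\ell(v_i)-1=2(i-1)(k-1)$. Writing $d=\gcd(\ell(v_i),\ell(v_{i+1}))$, Observation~\ref{obs}$(ii)$ gives $d=\gcd(\ell(v_i),\,2(k-1))$; since $\ell(v_i)$ is odd this forces $d\mid(k-1)$, and then $\ell(v_i)\equiv 1\pmod{k-1}$ forces $d\mid 1$. Hence $\gcd(\ell(v_i),\ell(v_{i+1}))=1$ and every path edge is satisfied.

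Combining the two cases shows all adjacent labels are relatively prime, so $S_{k,n}$ is odd prime. I expect the path edges $v_iv_{i+1}$ to be the only genuine obstacle: all other edges reduce verbatim to the power-of-two criterion of Observation~\ref{obs}$(i)$, and the path edges succeed precisely because threading exactly one attached path between consecutive $v_i$'s makes every $\ell(v_i)$ congruent to $1$ modulo $k-1$, which is what rules out a common factor across the span of $2(k-1)$.
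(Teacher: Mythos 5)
Your proposal is correct and takes essentially the same approach as the paper: the identical sequential labeling along the traversal $v_1, w_{1,1},\ldots,w_{1,k-2},v_2,\ldots,v_n$, with path-interior edges handled by consecutive odd labels and the spine edges $v_iv_{i+1}$ as the only nontrivial case. Your final gcd step (oddness plus $\ell(v_i)\equiv 1 \pmod{k-1}$) is a minor cosmetic variant of the paper's direct computation $\gcd(2(k-1)i+1,\,2(k-1))=\gcd(1,\,2(k-1))=1$ via Observation~\ref{obs}$(iii)$.
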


\begin{proof} Note that there are $(k-1)(n-1)+1$ vertices, so the largest label that we will use is $2(k-1)(n-1)+1$. We create our labeling function $\ell$ by assigning labels sequentially from 1 to $2(k-1)(n-1)+1$ to the following sequence of vertices: $$v_1,w_{1,1},w_{1,2},\ldots, w_{1,k-2},v_2,\ldots, v_{n-1},w_{n-1,1},w_{n-1,2},\ldots, w_{n-1,k-2},v_n.$$ The vertex pairs $w_{i,j}w_{i,j+1}$, as well as $v_iw_{i,1}$ and $w_{i,k-2}v_{i+1}$, will have relatively prime labels because they are labeled with consecutive odd integers. Therefore, the only pairs of labels that we have to confirm are relatively prime are on the vertices $v_i$ and $v_{i+1}$, where we note $\ell(v_i)=2(k-1)(i-1)+1$.
By Observation~\ref{obs} $(ii)$ for the second equality and $(iii)$ with $t=-i$ for the third equality, we have
\begin{align*}
\gcd(\ell(v_{i+1}),\ell(v_{i}))&=\gcd(2(k-1)i+1,2(k-1)(i-1)+1)\\
&=\gcd(2(k-1)i+1,2(k-1))\\
&=\gcd(1,2(k-1))\\
&=1.
\end{align*}
Thus all snake graphs $S_{k,n}$ are odd prime.
\end{proof}

\begin{figure}[t]
\begin{center}
\includegraphics[width=15 cm]{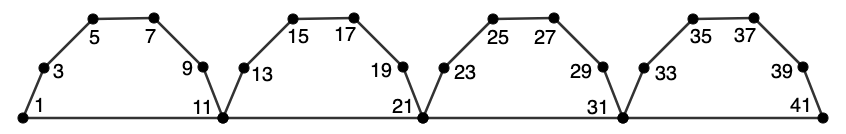}
\caption{An odd prime labeling of the snake graph $S_{6,5}$}\label{snake} 
\end{center}
\end{figure}

Our next class of graphs is similar to snake graphs in that $n$ cycles of length $k$ are linked together by shared vertices, but informally, they are glued together in the middle of the cycle instead of forming a path along their bases.  Cycle chains, denoted as $\mathcal{C}_k^n$, will be defined depending on the parity of the cycle length $k$. For even $k$, we begin with vertices $v_1,v_2,\ldots, v_{n+1}$ and form the following two paths between each pair: $v_i,w_{i,1}, w_{i,2},\ldots, w_{i,\frac{k}{2}-1},v_{i+1}$ and $v_i,x_{i,1}, x_{i,2},\ldots, x_{i,\frac{k}{2}-1},v_{i+1}$.  When $k$ is odd, we create the cycles in $\mathcal{C}_k^n$ in similar fashion, but the $w$-path will be one edge shorter than the $x$-path.  Specifically, we join the vertices $v_i$ and $v_{i+1}$ with the paths $v_i,w_{i,1}, w_{i,2},\ldots, w_{i,\frac{k-1}{2}-1},v_{i+1}$ and $v_i,x_{i,1}, x_{i,2},\ldots, x_{i,\frac{k-1}{2}},v_{i+1}$.  See Figure~\ref{cycle chain} for an example of a cycle chain with an odd prime labeling.  Cycle chains, particularly in the case of even length cycles, were examined for prime labelings in~\cite{D_E} where they were shown to be prime in certain cases, but we conclude in the next result that all cycle chains have an odd prime labeling. 

\begin{figure}[t]
\begin{center}
\includegraphics[width=12 cm]{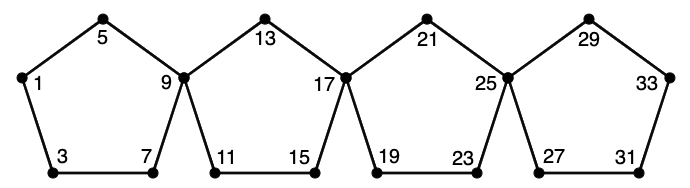}
\caption{An odd prime labeling of the cycle chain graph $C_5^4$}\label{cycle chain} 
\end{center}
\end{figure}

 
\begin{theorem}
Cycle chains, $\mathcal{C}_k^n$, are odd prime for all $k\geq 3$ and $n\geq 1$. 
\end{theorem}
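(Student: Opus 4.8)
The plan is to label the shared vertices along an arithmetic progression and then fill each $k$-cycle block with a ``union of cycles'' style labeling so that every edge joins two labels differing by a power of two. First I would record that $\mathcal{C}_k^n$ has $n(k-1)+1$ vertices: each of the $n$ cycles contributes $k-2$ private vertices, and there are $n+1$ shared vertices $v_1,\dots,v_{n+1}$. Hence the labels range over the odd integers $1,3,\dots,2n(k-1)+1$. I would then set $\ell(v_i)=2(k-1)(i-1)+1$ for $i=1,\dots,n+1$; these are exactly the available labels congruent to $1$ modulo $2(k-1)$, and they split the remaining labels into consecutive blocks, the $i$-th block being the $k-2$ odd integers strictly between $\ell(v_i)$ and $\ell(v_{i+1})=\ell(v_i)+2(k-1)$. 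Consecutive blocks overlap only in the single value $\ell(v_i)$, which is precisely the shared vertex, so the scheme is consistent and injective.

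Next I would label the private vertices of the $i$-th cycle, viewing that cycle as a copy of $C_k$ whose clockwise order places $v_i$ and $v_{i+1}$ at positions $0$ and $\lfloor k/2\rfloor$. Applying the labeling of Theorem~\ref{unioncycles} to this block with smallest label $m=\ell(v_i)$ assigns the minimum $m$ to $v_i$ and the maximum $m+2(k-1)=\ell(v_{i+1})$ to $v_{i+1}$, matching the values already fixed for the shared vertices, and distributes the intermediate labels over the $w$- and $x$-paths with every cycle edge joining labels that differ by $2$ or $4$. Concretely, writing $A=\ell(v_i)$, I would record the explicit assignment $\ell(w_{i,j})=A+4j-2$ and $\ell(x_{i,j})=A+4j$ for even $k$, and $\ell(w_{i,j})=A+4j$ and $\ell(x_{i,j})=A+4j-2$ for odd $k$; in each case the $w$-offsets and $x$-offsets exhaust the block (one path lands on the offsets $\equiv 0$ and the other on those $\equiv 2$ modulo $4$), and every consecutive pair along either path differs by $2$ or $4$.

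With the labeling in place the verification follows from Observation~\ref{obs}(i): every edge internal to a cycle, including the edges $v_iw_{i,1}$, $v_ix_{i,1}$, and the two edges joining $v_{i+1}$ to the last vertices of the $w$- and $x$-paths, joins labels whose difference is a power of two, hence relatively prime. The only vertices of degree exceeding two are the interior shared vertices $v_i$, and each such $v_i$ is simultaneously the maximum of block $i-1$ and the minimum of block $i$; since the construction places both neighbors of a block's minimum and both neighbors of its maximum at distance $2$ or $4$, all four neighbors of $v_i$ differ from $\ell(v_i)$ by a power of two, so those edges are relatively prime as well. The step needing the most care is exactly this: arranging the two internal paths, which have nearly equal length, so that both run from the minimum label at $v_i$ to the maximum at $v_{i+1}$ while every step remains a power of two. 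This is what forces the interleaving of offsets on residues $0$ and $2$ modulo $4$, and the unequal path lengths in the odd case are the reason the two parities require slightly different formulas; notably, because every difference is already a power of two, no $\gcd$-reduction argument of the kind used for snake graphs is needed here.
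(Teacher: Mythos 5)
Your proposal is correct and takes essentially the same approach as the paper: the paper also labels the shared vertices as $\ell(v_i)=2(k-1)(i-1)+1$ (by labeling sequentially starting from $\ell(v_1)=1$) and fills each cycle by interleaving the $w$- and $x$-paths with consecutive odd integers, which yields exactly your offset formulas ($w$-offsets $\equiv 2$, $x$-offsets $\equiv 0 \pmod 4$ for even $k$, and swapped for odd $k$), so every edge difference is $2$ or $4$ and Observation~\ref{obs}(i) applies.
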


\begin{proof}
We start by labeling $v_1$ with $1$ and then follow a similar alternating pattern as in the labeling of the disjoint union of cycles. If the cycle is even, we then label the vertices in this pattern for all $1\leq i\leq n$ :
$$ w_{i,1}, x_{i,1}, w_{i,2}, x_{i,2},\ldots , w_{i, \frac{k}{2}-1}, x_{i, \frac{k}{2}-1}, v_{i+1}$$
with the odd integers $1,3,\ldots, 2nk-2n+1$.  If the cycle is odd, label the vertices as follows for all $1\leq i\leq n$:
$$x_{i,1}, w_{i,1}, x_{i,2}, w_{i,2},\ldots , w_{i,\frac{k-3}{2}}, x_{i,\frac{k-1}{2}}, v_{i+1}$$
with the odd labels $1,3,\ldots,2nk-2n+1$.  Since every difference between the labels of adjacent vertices will be 2 or 4, all cycle chains are odd prime.
\end{proof}

We next examine the book graph $B_{n,k}$, defined as $n$ cycles each with $k$ vertices that all share a common edge. Each cycle is referred to as a page of the book graph.  We call the vertices of the shared edge $u$ and $v$.  Let $w_{i,j}$ be the vertices in the $i$th page with $j$ being the clockwise position within each cycle, such that $u$ is adjacent to each $w_{i,1}$ and $v$ is adjacent to $w_{i,k-2}$. Figure~\ref{book} includes a book graph featuring hexagonal pages with an odd prime labeling.  This particular class of graphs was shown to be prime in~\cite{S_Y} for the case of square pages or $k=4$.  We now prove the broader result that book graphs are odd prime for any size or number of pages.

\begin{figure}[t]
\begin{center}
\includegraphics[width=7 cm]{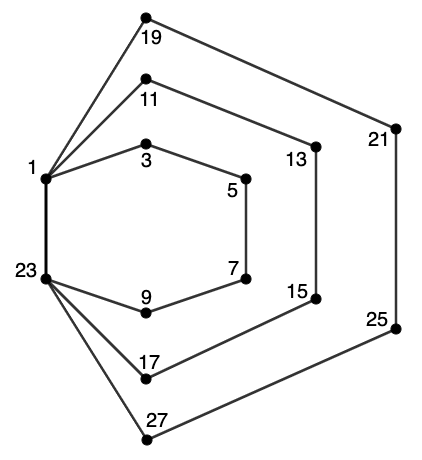}
\caption{An odd prime labeling of the book graph $B_{3,6}$}\label{book} 
\end{center}
\end{figure}

\begin{theorem}
Book graphs $B_{n,k}$ are odd prime for any $n\geq 1$ and $k\geq 3$.
\end{theorem}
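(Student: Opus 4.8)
The plan is to give the two high-degree vertices $u$ and $v$ labels that are coprime to all of their many neighbors, and then fill in the remaining path vertices of each page with a run of consecutive odd integers. Write $N=n(k-2)+2$ for the order of $B_{n,k}$, so that the available labels are the odd integers $1,3,\ldots,2N-1$. First I would set $\ell(u)=1$; since $\gcd(1,\cdot)=1$, this makes every edge incident to $u$ (namely $uv$ and the $n$ edges $uw_{i,1}$) automatically satisfied.

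For $v$ the idea is to choose a label coprime to \emph{every} other available label, so that the $n$ edges $vw_{i,k-2}$ and the edge $uv$ cause no trouble no matter how the rest of the labeling is arranged. By Bertrand's postulate there is a prime $p$ with $N<p<2N$; since $p>2$ it is odd, and $p\le 2N-1$, so $p$ is one of the available labels (and $p>N\ge 1$ forces $p\neq 1=\ell(u)$). Moreover the smallest odd multiple of $p$ other than $p$ itself is $3p>3N>2N-1$, so no other label in $\{1,3,\ldots,2N-1\}$ is divisible by $p$. Hence setting $\ell(v)=p$ guarantees $\gcd(\ell(v),\ell(w))=1$ for every vertex $w\neq v$.

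It then remains to label the $n(k-2)$ path vertices $w_{i,j}$ with the $N-2$ remaining labels so that the internal page edges $w_{i,j}w_{i,j+1}$ are relatively prime; the edges incident to $u$ and $v$ are already handled. I would list the remaining labels $3,5,\ldots,2N-1$ with $p$ deleted in increasing order and assign them, in that order, to $w_{1,1},w_{1,2},\ldots,w_{1,k-2},w_{2,1},\ldots,w_{n,k-2}$. Consecutive entries of this list differ by $2$, with the single exception of the jump of $4$ created by deleting $p$ (from $p-2$ to $p+2$). Since $B_{n,k}$ has no edges between vertices of different pages, every edge $w_{i,j}w_{i,j+1}$ joins two consecutive entries of the list and so has labels differing by $2$ or $4$; by Observation~\ref{obs}(i) such labels are relatively prime, completing the verification.

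The step that really pins down the construction is the choice of $v$, and the tight case is the triangular book $k=3$: there every $w_{i,1}$ is adjacent to both $u$ and $v$, so $v$ must be coprime to \emph{all} other labels, and a prime near $2N$ is essentially forced. The remaining care is routine: confirming Bertrand's postulate applies down to the smallest order (here $N\ge 3$) and noting that the gap left by $p$ is exactly $4$, hence a power of two, so Observation~\ref{obs}(i) still applies across it.
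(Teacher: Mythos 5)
Your proof is correct, and it shares the paper's two key ingredients: assigning $\ell(u)=1$ and using Bertrand's postulate to place a prime $p$ with $N<p<2N$ on $v$, so that $p$ is coprime to every other label (since $3p>2N-1$), with the page vertices receiving consecutive odd integers. Where you diverge is in the mechanics: the paper labels everything first (pages get consecutive runs of odd integers, $v$ gets the largest label $2N-1$) and then, if $2N-1$ is not prime, swaps the prime $p$ up to $v$ and shifts the labels of the single page that contained $p$; you instead delete $p$ from the label list \emph{before} labeling the pages, so the entire list of page labels is consecutive except for one gap of $4$ at the deletion point. Your version buys real soundness, not just elegance: after the paper's shift, the last vertex $w_{a,k-2}$ of the affected page receives the label $2(k-2)n+3$ while its neighbor $w_{a,k-3}$ retains a label near $2(k-2)a+1$, and the paper's claim that all remaining adjacent pairs are consecutive odd integers silently skips this pair, whose coprimality is not automatic. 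For instance, in $B_{12,5}$ the Bertrand range is $38<p<76$; choosing $p=53$ (which sits in page $9$, whose original labels are $51,53,55$) the paper's shift makes page $9$ read $51,55,75$, and $\gcd(55,75)=5$, so that choice of prime breaks the paper's labeling. Your construction has no such exceptional adjacency to check, since every page edge joins consecutive entries of your list and hence has labels differing by $2$ or $4$; it is the cleaner and, as written, the more complete argument.
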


\begin{proof}
We first assign $\ell(u)=1$, and then label the sequence of vertices in the first page $s_{1,1}$ to $s_{1,k-2}$ with $3,5,\ldots, 2k-3$, respectively. Continue the pattern of using the next $k-2$ odd integers to label the subsequent pages from $i=2$ to $n$. This leaves the vertex $v$ to be labeled as the next available odd integer so that $\ell(v)=2(k-2)n+3$.

After this initial labeling, the current label of $v$ may share a common factor with the label of one or more of the vertices $\ell(w_{i,k-2})$ for some $i$. Therefore, we will appeal to Bertrand's postulate, which states for any integer $t>1$, there always exists at least one prime number $p$ with $t<p<2t$. Thus, considering $t$ to be the number of vertices in $B_{n,k}$, which is $(k-2)n+2$, we can guarantee that there will be a prime $p$ such that $(k-2)n+2<p<2(k-2)n+4$. This means that we have already used this prime number within our labeling.  If $p=2(k-2)n+3$, then we can leave this as the label of~$v$.  Otherwise, $\ell(w_{a,b})=p$ for some particular $a$ and $b$, so we reassign $\ell(v)=p$ and shift the labels on page $a$ so that $\ell(w_{a,k-2})=2(k-2)n+3$ and each $\ell(w_{a,j})$ for $j=b$ to $k-3$ is assigned the original label of $w_{a,j+1}$.  

We clearly have that $\gcd(\ell(u),\ell(w_{i,1}))=1$ since $\ell(u)=1$.  Since $p$ is a prime number that is large enough so that $2p$ is greater than any other label on the graph, $\gcd(\ell(v),\ell(w_{i,k-2}))=1$ as well.  The labels on vertices $w_{a,b-1}$ and $w_{a,b}$ differ by $4$ after the labeling shift has been made, and all adjacent pairs of the form $w_{i,j}$, $w_{i,j+1}$ have labels that are consecutive odd integers. Thus, our labeling of the book graph $B_{n,k}$ is odd prime. 
\end{proof}

Next, we will investigate prism graphs. We define prism graphs as two cycles each with $n$ vertices, the outside cycle with vertices $v_1$ to $v_n$ and the inside cycle with vertices  $u_1$ to $u_n$. The edges are defined as $u_iu_{i+1}$, $v_iv_{i+1}$ for $i=1$ to $n-1$, $u_nu_1$, $v_nv_1$, and $u_iv_i$ for $i=1$ to $n$. Prism graphs are also a type of generalized Peterson graph, so we use the notation $GP(n,1)$.  See Figure~\ref{Prism} for an example of an odd prime labeling of a prism graph.  Although we will show all prism graphs have odd prime labelings, this is not the case for prime labelings.  When $n$ is odd, the prism graph is not prime and has only been shown to be prime in certain cases for even $n$, as seen in~\cite{HLYZ}.

\begin{figure}[t]
\begin{center}
\includegraphics[width=7 cm]{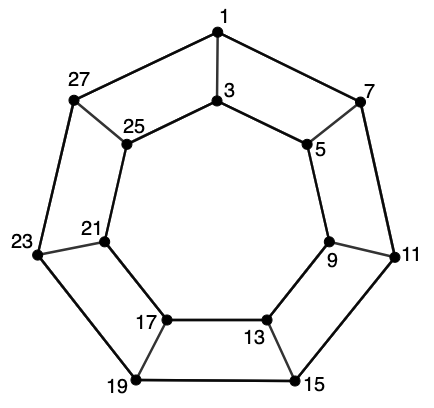}
\caption{An odd prime labeling of the prism graph GP(7,1)}\label{Prism} 
\end{center}
\end{figure}

\begin{theorem}\label{prismgraphs}
The prism graph $GP(n,1)$ is odd prime for all $n\geq 3$.
\end{theorem}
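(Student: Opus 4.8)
The plan is to give an explicit labeling in which every edge joins labels that differ by a small power of two, so that Observation~\ref{obs}~$(i)$ applies directly, leaving only a couple of unavoidable exceptions to be settled by a direct $\gcd$ computation. The natural attempt is to pair up each spoke by setting $\ell(v_i)=4i-3$ and $\ell(u_i)=4i-1$ for $i=1,\ldots,n$. Since the values $4i-3$ and $4i-1$ together range over all odd integers from $1$ to $4n-1$, this is a bijection onto the required label set. First I would check the ``generic'' edges: each spoke $u_iv_i$ carries labels differing by $2$, while each non-wraparound rung $v_iv_{i+1}$ of the outer cycle and $u_iu_{i+1}$ of the inner cycle carries labels differing by $4$. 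All of these are coprime by Observation~\ref{obs}~$(i)$.

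The two edges that escape this pattern are the wraparound edges $v_nv_1$ and $u_nu_1$. For the outer wraparound I get $\gcd(\ell(v_n),\ell(v_1))=\gcd(4n-3,1)=1$ for free, since $\ell(v_1)=1$. The remaining edge is the crux: $\gcd(\ell(u_n),\ell(u_1))=\gcd(4n-1,3)$, which equals $1$ precisely when $3\nmid 4n-1$, i.e.\ when $n\not\equiv 1\pmod 3$. Thus for $n\not\equiv 1\pmod 3$ the paired labeling already works and nothing more is needed.

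The main obstacle is therefore the single residue class $n\equiv 1\pmod 3$, in which the two multiples of $3$ at the top of the inner cycle, namely $\ell(u_1)=3$ and $\ell(u_n)=4n-1$, become adjacent. I would repair this with a local swap: retain the labeling above but exchange the labels of $u_1$ and $v_1$, so that $\ell(u_1)=1$ and $\ell(v_1)=3$. This fixes the inner wraparound immediately, since $\gcd(4n-1,1)=1$, and it suffices to recheck only the five edges incident to $u_1$ and $v_1$: one finds $\gcd(3,5)=1$ on $v_1v_2$, $\gcd(1,7)=1$ on $u_1u_2$, $\gcd(3,1)=1$ on the spoke $u_1v_1$, and the new outer wraparound $\gcd(4n-3,3)=1$, which holds because $4n-3\equiv n\equiv 1\pmod 3$. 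Every other edge is untouched and still has a power-of-two difference. The one subtlety to be careful about is exactly this last verification: confirming that the swap in the exceptional case does not disturb any edge whose coprimality relied on the original labels, which is why I would enumerate precisely the edges meeting $u_1$ and $v_1$. With both cases handled, $GP(n,1)$ is odd prime for all $n\ge 3$.
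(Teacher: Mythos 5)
Your proposal is correct and is essentially identical to the paper's own proof: the same labeling $4i-3$, $4i-1$ on the two cycles (with the roles of $u_i$ and $v_i$ merely interchanged), the same observation that only the wraparound edge can fail when $n\equiv 1\pmod 3$, and the same repair by swapping the labels $1$ and $3$ followed by rechecking the edges incident to those two vertices. No further changes are needed.
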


\begin{proof}
We begin by assigning the labels as follows: $\ell(u_i)=4i-3$ and $\ell(v_i)=4i-1$ for $i=1$ to $n$. For all edges of the form $u_iv_i$, we have $\gcd(\ell(u_i),\ell(v_i))=1$ because the labels are consecutive odd integers.  Likewise, the labels on the endpoints of the edges $v_iv_{i+1}$ and $u_iu_{i+1}$, for $i=1$ to $n-1$, will be relatively prime because they differ by $4$. 
 
 The last edges to consider are $u_nu_1$ and $v_nv_1$. Since  $\ell(u_1)=1$, it will always be relatively prime with $\ell(u_n)$. 
 If $n\not\equiv 1\pmod{3}$, then $\ell(v_n)=4n-1$ is not a multiple of $3$ and thus $\gcd(\ell(v_n),\ell(v_1))=\gcd(4n-1,3)=1$.  This proves that $\ell$ is an odd prime labeling in this case. 
 If $n\equiv 1\pmod{3}$ then 3 divides $\ell(v_n)=4n-1$, so a swap must be made in our labeling. We reassign $\ell(u_1)=3$ and $\ell(v_1)=1$. 
 Now we have $\gcd(\ell(v_n),\ell(v_1))=1$ since $\ell(v_1)=1$, and 
 $\gcd(\ell(u_n),\ell(u_1))=\gcd(4n-3,3)=1$ since $3$ does not divide $4n-3$ when $n\equiv 1\pmod{3}$.  Note that we still have $\gcd(\ell(u_1),\ell(u_2))=\gcd(3,5)=1$ and $\gcd(\ell(v_1),\ell(v_2))=\gcd(1,7)=1$.  Therefore, we have created an odd prime labeling in this case of $n\equiv 1\pmod{3}$, which proves that all prisms are odd prime.
 
\end{proof}

Prisms are a special case with $k=1$ of the generalized Petersen graph, $GP(n,k)$, where $n\geq 3$ and $1\leq k\leq \lfloor (n-1)/2\rfloor$.  This graph has $2n$ vertices $v_1,v_2,\ldots,v_n,u_1,u_2,\ldots, u_n$ and $3n$ edges of the forms $v_iv_{i+1}$, $u_iu_{i+k}$, and $v_iu_i$ with indices calculated modulo $n$.  It has been shown in~\cite{P_S} that $GP(n,2)$ is odd prime for any $n$ and conjectured in the same paper $GP(n,k)$ is odd prime for any valid $n$ and~$k$.  While $GP(n,k)$ is not prime for any odd $k$, Theorem~\ref{Prism} add evidence to their odd prime conjecture, although it remains open for $k>2$.

We next turn our focus onto stacked prism graphs, denoted by $Y_{k,n}$ with $k\geq 3$ and $n\geq 1$, which is a generalization of prisms graphs.  They consist of $n$ polygons each with $k$ vertices that are connected by edges between consecutive polygons.  More formally, $Y_{k,n}$ is the Cartesian product of a cycle of length $k$ with a path of length $n$, or $C_k\square P_n$.  We will show triangular, pentagonal, and hexagonal stacked prisms have odd prime labelings. Additionally, we will extend these results for stacked prisms with $2^m$ sides on the polygons.  With respect to prime labelings, $Y_{k,n}$ is not prime for any odd~$k$. No work on the even $k$ case has been previously published, although the independence number of $Y_{k,n}$ for even $k$ would lead one to believe they are prime.
  
First, we will consider triangular stacked prisms, $Y_{3,n}$. We use $v_{i,j}$ with $1\leq j\leq3$ to refer to the vertices on the $i$th triangle. The edges are defined as $v_{i,1}v_{i,2}$, $v_{i,2}v_{i,3}$, and $v_{i,3}v_{i,1}$ for $i=1,2,\ldots, n$ and $v_{i,j}v_{i+1,j}$ with $1\leq i\leq n-1$ and $1\leq j\leq 3$.  Figure~\ref{Triangluar} shows an example of an odd prime labeling on a triangular stacked prism with $n=4$.

\begin{figure}[t]
\begin{center}
\includegraphics[width=10 cm]{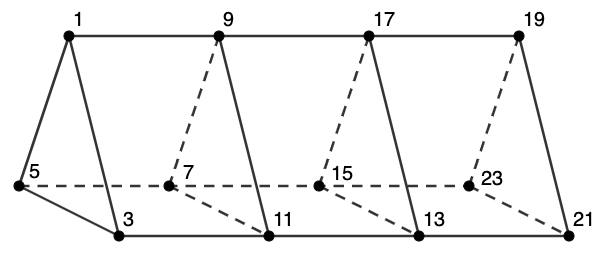}
\caption{An odd prime labeling of a triangular stacked prism $Y_{3,4}$ }\label{Triangluar} 
\end{center}
\end{figure}

\begin{theorem}\label{triangular}
The triangular stacked prism, $Y_{3,n}$, is odd prime for all $n \geq 1$.
\end{theorem}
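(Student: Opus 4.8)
The plan is to give a single explicit labeling that works for every $n$ at once, built so that the endpoints of every edge differ by a power of two; Observation~\ref{obs}(i) then makes all adjacent labels coprime with no case analysis. Concretely, I would hand the $i$th triangle the block of three consecutive odd integers $6i-5,\,6i-3,\,6i-1$ (so triangle $1$ gets $1,3,5$, triangle $2$ gets $7,9,11$, and so on, exhausting the labels $1,3,\dots,6n-1$). Inside a single triangle these three labels differ pairwise by $2$, $2$, and $4$, so each of the three triangle edges $v_{i,1}v_{i,2}$, $v_{i,2}v_{i,3}$, $v_{i,3}v_{i,1}$ already has coprime endpoints no matter how the three labels are distributed among the positions.

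The only remaining freedom is how to place a block's three labels onto the positions $j=1,2,3$, and this is exactly where the vertical edges $v_{i,j}v_{i+1,j}$ must be controlled. I would rotate the assignment cyclically from one triangle to the next: if position $j$ of triangle $i$ carries the label $6i-5+2r$ with $r\in\{0,1,2\}$, then position $j$ of triangle $i+1$ carries $6(i+1)-5+2r'$ where $r'\equiv r+1\pmod 3$. A compact way to state this is $\ell(v_{i,j}) = 6i-5+2\big((i+j-2)\bmod 3\big)$. Under this rule each vertical edge joins a label of offset $r$ to one of offset $r+1\bmod 3$ in the next block, so the three vertical label gaps at each level are $8$, $8$, and $2$ — all powers of two. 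Combined with the triangle edges above, every edge of $Y_{3,n}$ then has endpoints differing by a power of two, and Observation~\ref{obs}(i) finishes the proof.

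The conceptual obstacle, and the reason a naive labeling fails, is the distribution of the multiples of three. The middle element $6i-3$ of each block is divisible by $3$, so if one simply ran each ``column'' as an arithmetic progression with common difference $6$ (placing $6i-5$ at every $v_{i,1}$, and so on), then one whole column would consist entirely of multiples of $3$ and its vertical edges would all have $\gcd 3$. The cyclic rotation is precisely what breaks up the multiples of three so that no two are ever vertically adjacent; better still, by forcing every vertical step to advance the offset by $+1\pmod 3$, it turns all vertical differences into powers of two, which simultaneously rules out the factor $3$ and every other possible common factor (for instance the gap $10$ that a $+2$ step would create, which could carry a shared factor $5$). The one step that needs care is checking that the rotation never produces the bad gaps $6$ or $10$, but this reduces to the single observation that a $+1\pmod 3$ shift only ever yields the label gaps $8,8,2$.
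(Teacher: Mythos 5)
Your proof is correct and is essentially the paper's own argument: your closed-form rule $\ell(v_{i,j}) = 6i-5+2\bigl((i+j-2)\bmod 3\bigr)$ reproduces exactly the labeling the paper gives by table for $i\le 3$ and extends periodically by adding $18$ per period, and the verification is the same powers-of-two check (differences $2$, $4$ within each triangle and $2$, $8$ on vertical edges, then Observation~\ref{obs}(i)). The only difference is presentational: you package the rotation as one formula rather than a table plus a shift argument.
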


\begin{proof}
We will refer to the table below to label this graph. For any $v_{i,j}$ with $i\leq 3$, let $\ell(v_{i,j})$ be the corresponding value in the table. For any $v_{i,j}$ with $i>3$, this means that $i=3a+b$ for some positive integer $a$ and some integer $b$ with $0\leq b\leq 2$. Then we assign $\ell(v_{i,j})=\ell(v_{b,j})+18a$.

\begin{center}
\begin{tabular}{ |p{2cm}|p{2cm}|p{2cm}|p{2cm}| }
  \hline
  & $v_{i,1}$ & $v_{i,2}$ & $v_{i,3}$  \\
  \hline
  $i=1$ & 1 & 3 & 5 \\
  \hline 
  $i=2$ & 9 & 11 & 7 \\
  \hline
  $i=3$ & 17 & 13 & 15 \\
  \hline 
\end{tabular}
\end{center}

By inspecting the table above, for $i=1$ to $3$, we can see that the difference between the labels of the adjacent vertices $v_{i,j}$ and $v_{i+1,j}$ is always $2$ or $8$. Similarly, the difference between the labels of adjacent vertices $v_{i,j}$ and $v_{i,k}$ within the same triangle will be $2$ or $4$. Considering one more level in the stacked prism, when $i=4$, the labels of this triangle are shifted by 18 from the first triangle to be $\ell(v_{4,1})=19$, $\ell(v_{4,2})=21$, and $\ell(v_{4,3})=23$.  This implies the difference between labels of the vertices~$v_{3,j}$ and $v_{4,j}$ are either $2$ or $8$.

Note that for $i>4$ the adjacent labels will remain relatively prime because adding the same multiples of $18$ will preserve the differences in the labels that were all powers of $2$. Thus all triangular stacked prism graphs have an odd prime labeling.  

\end{proof}

Pentagonal stacked prisms, $Y_{5,n}$, are very similar to triangular stacked prisms as the name suggests, but with pentagons in each layer of the prism.  We refer to the vertices on the $i$th pentagon as $v_{i,j}$ with $1\leq j\leq5$. The edges are defined for each $i=1,2,\ldots, n$ as $v_{i,j}v_{i,j+1}$ with $1\leq j\leq 4$ and $v_{i,5},v_{i,1}$, and for $i=1,2,\ldots, n-1$ as $v_{i,j},v_{i+1,j}$ with $1\leq j\leq 5$ .  An example of a pentagonal stacked prism with an odd prime labeling is displayed in Figure~\ref{Pentagonal}.

\begin{figure}[t]
\begin{center}
\includegraphics[width=17 cm]{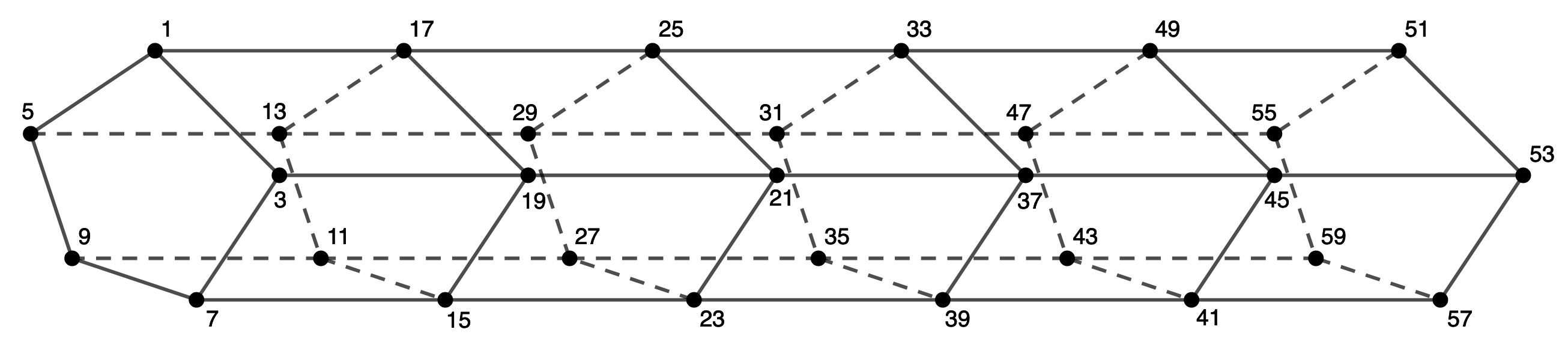}
\caption{An odd prime labeling of a pentagonal stacked prism graph $Y_{5,6}$}\label{Pentagonal} 
\end{center}
\end{figure}

\begin{theorem}\label{pentagonal}
The pentagonal stacked prism, $Y_{5,n}$ is odd prime for all $n \geq 1$.
\end{theorem}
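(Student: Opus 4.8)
The plan is to imitate the periodic-table construction used for the triangular prism in Theorem~\ref{triangular}. Writing $v_{i,j}$ for the vertices of the $i$th pentagon, I will give layer~$i$ the five consecutive odd labels $10i-9,10i-7,\ldots,10i-1$, so that the first $n$ layers use exactly the labels $1,3,\ldots,10n-1$; for arbitrary $n$ one simply truncates the infinite pattern after $n$ layers, and since this only deletes edges, the restriction remains a valid labeling. Inside a single pentagon I will place these five labels in their natural cyclic order, giving consecutive differences $2,2,2,2,8$, so that by Observation~\ref{obs} $(i)$ every pentagon edge joins coprime labels. The real content is choosing how the labels of consecutive layers line up in the five vertical columns $v_{i,j}v_{i+1,j}$.

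The key difficulty, and the reason the pentagonal case needs its own argument, is that---unlike the triangular prism---one cannot make every vertical difference a power of two. With a layer-to-layer increment of $10$ and each pentagon a $5$-cycle, a short permutation count shows that forcing all five column differences to be powers of two forces a unique rearrangement of the labels that destroys the power-of-two structure inside the next pentagon. Hence at least one column edge per layer must have a difference of the form $6,10,12,$ or~$18$, that is, a difference whose only odd prime factors are $3$ or~$5$. The whole problem therefore reduces to scheduling the vertical alignment so that no such exceptional edge ever joins two labels that are both divisible by~$3$, or both divisible by~$5$.

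Two structural facts make this manageable. First, within any layer the at most two multiples of~$3$ differ by exactly~$6$, i.e.\ they sit at two positions three apart in the cyclic order; the natural-order placement never puts two such labels on a common pentagon edge, so intra-pentagon collisions vanish automatically, and the single multiple of~$5$ in each layer, namely the label $10i-5$, is isolated for the same trivial reason. Second, shifting each layer cyclically by four positions relative to the previous one keeps every pentagon edge a power of two, sends each multiple of~$5$ to a vertical neighbor differing from it by~$8$, and leaves exactly one exceptional vertical edge per layer, of difference~$18$. Tracking the labels modulo~$3$, this lone difference-$18$ edge joins two multiples of~$3$ only on every third layer, and I would clear those periodically-patterned collisions either by replacing the shift at the offending transition or by a single local swap, checking each affected $\gcd$ directly via Observation~\ref{obs} $(ii)$--$(iii)$. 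Because the resulting arrangement is periodic (each block of layers shifted by a fixed multiple of~$10$), the verification is finite. The main obstacle throughout is precisely this bookkeeping of the multiples of~$3$ on the forced non--power-of-two vertical edges, since everything involving the primes $2$ and~$5$ is handled automatically by the structure above.
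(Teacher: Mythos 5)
Your proposal contains a genuine gap, and it stems from a false structural claim. You assert that, unlike the triangular case, ``one cannot make every vertical difference a power of two,'' and you build the rest of the argument around managing the resulting exceptional edges. But this impossibility only holds inside the framework you imposed on yourself, namely keeping each pentagon in natural cyclic order (differences $2,2,2,2,8$). The paper's proof shows the claim is false in general: it exhibits a period-$5$ table of permutations (first layer $1,3,7,9,5$; second layer $17,19,15,11,13$; etc., with $\ell(v_{i,j})=\ell(v_{b,j})+50a$ for $i=5a+b$) in which \emph{every} pentagon edge has difference $2$ or $4$ and \emph{every} vertical edge has difference $2$, $8$, or $16$. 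Once all adjacent labels differ by powers of two, Observation~\ref{obs}~$(i)$ finishes the proof immediately, with no case analysis on the primes $3$ and $5$ at all. Your ``short permutation count'' is correct only when the lower layer is in natural order; allowing different (non-cyclic) permutations in different layers, as the paper does, dissolves the obstruction.

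The consequence is that the genuinely hard step of your plan is exactly the one you leave unfinished. With the shift-by-four scheme, the lone difference-$18$ vertical edge joins two multiples of $3$ at every third transition (e.g.\ the pair $21,39$ between layers $3$ and $4$, then $51,69$, and so on), and you propose to repair each such collision ``either by replacing the shift at the offending transition or by a single local swap.'' Neither option is established: no other cyclic shift keeps more than one vertical difference outside the powers of two (shifts by $0,1,2,3$ produce several differences of $6$, $10$, $12$, or $14$), so the first option forces you outside your own framework; and a local swap changes the arrangement of that layer, so the next transition no longer matches your pattern and the claimed periodicity (hence the ``finite verification'') no longer holds as stated. Since these collisions recur for infinitely many $n$, this is not a boundary case one can wave at---it is the crux. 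To complete your route you would need to exhibit the repaired arrangement explicitly, show it is periodic (the natural period becomes $15$ rather than $5$), and verify every affected edge; alternatively, abandoning the natural-order constraint and searching for an all-powers-of-two table, as the paper does, is both shorter and removes the problem entirely.
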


\begin{proof}
As in the last proof, we will provide a table of labels for the first several layers of the stacked prism. For any $v_{i,j}$ with $i\leq5$, we assign $\ell(v_{i,j})$ to be the value in the table below. For any $v_{i,j}$ with $i>5$ we have $i=5a+b$ for some positive integer $a$ and integer $b$ with $0\leq b\leq 4$. Then assign $\ell(v_{i,j})=\ell(v_{b,j})+50a$. 

\begin{center}
\begin{tabular}{ |p{2cm}|p{2cm}|p{2cm}|p{2cm}|p{2cm}|p{2cm}| }
  \hline
  & $v_{i,1}$ & $v_{i,2}$ & $v_{i,3}$ & $v_{i,4}$ & $v_{i,5}$ \\
  \hline
  $i=1$ & 1 & 3 & 7 & 9 & 5 \\
  \hline 
  $i=2$ & 17 & 19 & 15 & 11 & 13 \\
  \hline
  $i=3$ & 25 & 21 & 23 & 27 & 29 \\
  \hline 
  $i=4$ & 33 & 37 & 39 & 35 & 31 \\
  \hline
  $i=5$ & 49 & 45 & 41 & 43 & 47 \\
  \hline 
\end{tabular}
\end{center}

By inspection of the table, it is clear that the difference between the labels of adjacent vertices~$v_{i,j}$ and $v_{i+1,j}$ is either $2$, $8$ or $16$. We can also see that the difference between labels of adjacent vertices $v_{i,j}$ and $v_{i,j+1}$ for $1\leq j\leq 4$ and $v_{i,5}$ and $v_{i,1}$ will always be $2$ or $4$.  Adjacent vertices from the fifth pentagon and the sixth one after shifting the labels by 50 to get $\ell(v_{6,j})=\ell(v_{1,j})$ also have labels that differ by $2$, $8$, or $16$.

For $i>5$, shifting by the same multiple of $50$ will preserve the differences in adjacent labels to keep them as powers of $2$. Thus all pentagonal stacked prisms have an odd prime labeling.

\end{proof}

We continue with another type of stacked prism graph, $Y_{6,n}$, the hexagonal stacked prism graph. The vertices on the $i$th hexagon are referred to as $v_{i,j}$ with $1\leq j\leq6$. The edges are defined for $i=1,2,\ldots, n$ as $v_{i,j},v_{i,j+1}$ with $1\leq j\leq 5$ and $v_{i,6},v_{i,1}$, and for $i=1,2,\ldots, n-1$ as $v_{i,j},v_{i+1,j}$ with $1\leq j\leq 6$. Figure~\ref{hexagonal} includes an example of a hexagonal stacked prism with an odd prime labeling.

\begin{figure}[t]
\begin{center}
\includegraphics[width=16 cm]{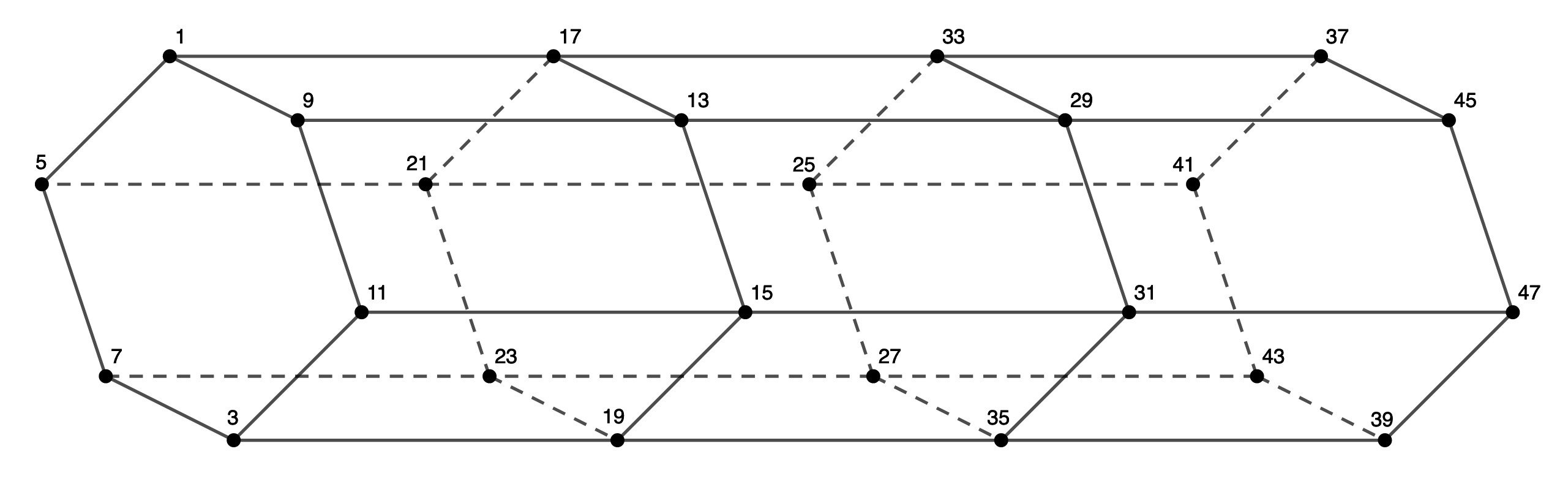}
\caption{An odd prime labeling of a hexagonal stacked prism $Y_{6,4}$ }\label{hexagonal} 
\end{center}
\end{figure}

\begin{theorem}
The hexagonal stacked prism, $Y_{6,n}$, is odd prime for all $n\geq 1$.
\end{theorem}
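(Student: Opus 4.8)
The plan is to follow the template already established for the triangular and pentagonal stacked prisms: exhibit an explicit table of labels for a short initial block of layers forming one period, arrange the entries so that \emph{every} pair of adjacent labels differs by a power of two, and then propagate the pattern to all remaining layers by a constant additive shift that leaves those differences unchanged. Because two odd numbers differing by a power of two are automatically coprime (Observation~\ref{obs}(i)), once such a table is found the entire verification reduces to reading off the differences inside one period and checking they are all powers of two.

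Concretely, I would take the period to consist of three layers (eighteen vertices, exhausting the odd labels $1,3,\ldots,35$) with shift $36=2\cdot 18$. A table that works is
\begin{center}
\begin{tabular}{ |p{1.5cm}|p{1.5cm}|p{1.5cm}|p{1.5cm}|p{1.5cm}|p{1.5cm}|p{1.5cm}| }
  \hline
  & $v_{i,1}$ & $v_{i,2}$ & $v_{i,3}$ & $v_{i,4}$ & $v_{i,5}$ & $v_{i,6}$ \\
  \hline
  $i=1$ & 1 & 3 & 7 & 11 & 9 & 5 \\
  \hline
  $i=2$ & 17 & 19 & 23 & 15 & 13 & 21 \\
  \hline
  $i=3$ & 33 & 35 & 27 & 31 & 29 & 25 \\
  \hline
\end{tabular}
\end{center}
For $i>3$, write $i=3a+b$ with $a\geq 1$ and $b\in\{1,2,3\}$ (taking $b=3$ when $3\mid i$), and set $\ell(v_{i,j})=\ell(v_{b,j})+36a$, so each successive block of three layers repeats the table with every entry raised by $36$. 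In particular layer $i$ receives the $i$th block of six consecutive odd numbers, which guarantees that $\ell$ is a bijection onto $\{1,3,\ldots,12n-1\}$.

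The verification then splits into three checks. First, the six horizontal edges of each hexagon, $v_{i,1}v_{i,2},\ldots,v_{i,6}v_{i,1}$, have label differences in $\{2,4,8\}$. Second, the vertical edges $v_{i,j}v_{i+1,j}$ between the tabulated layers have differences in $\{4,16\}$. Third—and this is the one genuinely constrained step—the wrap-around vertical edges joining the top layer of one period to the bottom of the next, namely $v_{3,j}$ to $v_{4,j}=v_{1,j}+36$, must again differ by a power of two; this forces the three numbers in each column to be chosen so that the two internal vertical gaps together with the wrap-around gap (which sum to $36$) are simultaneously powers of two. The main obstacle is exactly this construction: one must satisfy a small simultaneous system requiring each hexagon to be a six-cycle with power-of-two differences \emph{while} all three vertical gaps in every column are powers of two. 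Once the table above is in hand, adding the same multiple of $36$ on every later period preserves each difference verbatim, so Observation~\ref{obs}(i) applies uniformly and no ad hoc label swap of the kind used for prisms is needed; the even side length $6$ is absorbed entirely into the periodic power-of-two pattern.
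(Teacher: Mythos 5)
Your proposal is correct and follows essentially the same approach as the paper's proof: both exhibit an explicit table of labels for the first three hexagonal layers in which every horizontal and vertical adjacency differs by a power of two, and then extend periodically by adding multiples of $36$, invoking Observation~\ref{obs}(i). Your specific table (e.g., first layer $1,3,7,11,9,5$) differs from the paper's (first layer $1,9,11,3,7,5$), but I verified that all of your differences are indeed powers of two, including the wrap-around gaps from layer $3$ to layer $4$, so the argument goes through verbatim.
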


\begin{proof}
As in previous stacked prism results, for $v_{i,j}$ with $i\leq 3$, we assign $\ell(v_{i,j})$ as the value in the table below. For $i>3$, we have $i=3a+b$ for some positive integer $a$ and an integer $b$ with $0\leq b\leq 2$. In this case, we assign $\ell(v_{i,j})=\ell(v_{b,j})+36a$. 

\begin{center}
\begin{tabular}{ |p{2cm}|p{2cm}|p{2cm}|p{2cm}|p{2cm}|p{2cm}|p{2cm}| }
  \hline
  & $v_{i,1}$ & $v_{i,2}$ & $v_{i,3}$ & $v_{i,4}$ & $v_{i,5}$ &$v_{i,6}$ \\
  \hline
  $i=1$ & 1 & 9 & 11 & 3 & 7 & 5 \\
  \hline 
  $i=2$ & 17 & 13 & 15 & 19 & 23 & 21 \\
  \hline
  $i=3$ & 33 & 29 & 31 & 35 & 27 & 25 \\
  \hline 
\end{tabular}
\end{center}

One can see from the table that the difference between the labels of adjacent vertices $v_{i,j}$ and $v_{i+1,j}$ is $4$ or $16$. Meanwhile, the difference between labels of adjacent vertices $v_{i,j}$ and $v_{i,j+1}$ for $1\leq j\leq 5$ and $v_{i,6}$ and $v_{i,1}$ will always be $2, 4$ or $8$.  When considering the edges between the third and fourth hexagons, the labels after shifting by $36$ on the fourth hexagon are $37$, $45$, $47$, $39$, $43$, and $41$, creating differences of $4$ and $16$.

For $i>3$ adjacent labels will continue to be relatively prime because adding the same multiples of $36$ will preserve the differences from the earlier cases of $i$. Thus all hexagonal stacked prisms have an odd prime labeling.

\end{proof}

We now expand our results on stacked prisms to polygons where the number of sides are any power of $2$. As with our previous cases, we refer to the vertices on the $i$th polygon as $v_{i,j}$ where $1\leq j\leq2^m$. The edges are the following: $v_{i,j}, v_{i,j+1}$ with $1\leq j\leq 2^m -1$ and $v_{i,1}, v_{i,j}$ for $1\leq i \leq n$, and $v_{i,j}, v_{i+1,j}$ for $1\leq j\leq 2^m$ and $1\leq i\leq n-1$.

\begin{theorem}
The $2^m$-sided polygonal stacked prism, $Y_{2^m,n}$, is odd prime for all $m\geq 2$ and $n\geq 1$.
\end{theorem}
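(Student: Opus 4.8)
The plan is to give a single uniform labeling valid for all $n$ at once, in contrast to the layer-by-layer tables used for $Y_{3,n}$, $Y_{5,n}$, and $Y_{6,n}$; this streamlining is exactly what the power-of-two structure buys us. The guiding principle, as in every stacked-prism proof above, is that if the labels are distinct odd integers and every edge joins two labels whose difference is a positive power of $2$, then Observation~\ref{obs}~$(i)$ immediately gives $\gcd(\ell(u),\ell(v))=1$ on each edge. So the whole problem reduces to exhibiting such a labeling, and the only genuine work is arranging the $2^m$ vertices of a single polygon into a cyclic order whose consecutive odd labels differ by powers of two.

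The key ingredient is the auxiliary sequence hinted at earlier in the draft: for each $m\ge 1$ there is an ordering $s_1,s_2,\ldots,s_{2^m}$ of the odd integers $\{1,3,\ldots,2^{m+1}-1\}$ with $s_1=1$ and $s_{2^m}=2^m+1$, such that $|s_j-s_{j+1}|$ is a power of $2$ for every $1\le j\le 2^m-1$, and moreover the cyclic gap $|s_{2^m}-s_1|=2^m$ is also a power of $2$. I would produce this by setting $s_j=2\gamma_{j-1}+1$, where $\gamma_0,\gamma_1,\ldots,\gamma_{2^m-1}$ is the $m$-bit reflected binary Gray code read as integers. Because consecutive Gray codewords differ in exactly one bit, consecutive $\gamma$'s differ by a single power of two, so the $s_j$ differ by twice a power of two, again a power of two. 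The endpoints satisfy $\gamma_0=0$ and $\gamma_{2^m-1}=2^{m-1}$, giving $s_1=1$, $s_{2^m}=2^m+1$, and cyclic gap $2^m$. The reflected code itself is built by the standard recursion $G_m=(0\,G_{m-1},\,1\,\overline{G_{m-1}})$, prefixing $0$ to $G_{m-1}$ and then $1$ to its reversal, and an easy induction on $m$ verifies both the one-bit-change property along the list and the cyclic closure between its ends.

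With the sequence in hand I would label
\[
\ell(v_{i,j}) = s_j + (i-1)\,2^{m+1}, \qquad 1\le i\le n,\ 1\le j\le 2^m .
\]
Layer $i$ then occupies exactly the $2^m$ consecutive odd integers from $2^{m+1}(i-1)+1$ to $2^{m+1}i-1$, so the labels are distinct and exhaust $\{1,3,\ldots,2^{m+1}n-1\}$, matching the $2^m n$ vertices. There are three edge types to check. A polygon edge $v_{i,j}v_{i,j+1}$ has $|\ell(v_{i,j})-\ell(v_{i,j+1})|=|s_j-s_{j+1}|$, a power of two by the lemma; the closing edge $v_{i,1}v_{i,2^m}$ has difference $|s_1-s_{2^m}|=2^m$; and a vertical edge $v_{i,j}v_{i+1,j}$ has difference exactly $2^{m+1}$, since the shift cancels the common term $s_j$. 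Every edge difference is a power of two, so Observation~\ref{obs}~$(i)$ closes the argument, using $m\ge 2$ only to guarantee that $C_{2^m}$ is a genuine cycle.

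The main obstacle is the auxiliary lemma, and within it the cyclic closure $|s_1-s_{2^m}|=2^m$: the inductive step must track not merely that adjacent codewords of $G_m$ differ in one bit but that the \emph{first} and \emph{last} codewords do as well, which is precisely where the reflection $\overline{G_{m-1}}$ in the recursion is essential. Once that is secured, the remainder is the bookkeeping above, and the construction works uniformly in $n$ with no separate base cases and none of the residue-dependent swaps that were forced when the number of sides was not a power of two.
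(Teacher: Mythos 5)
Your proposal is correct, and at the structural level it is the same proof as the paper's: label the $i$th polygon with the block of consecutive odd integers from $(i-1)2^{m+1}+1$ to $i\cdot 2^{m+1}-1$, repeat the identical cyclic pattern on every layer so that each vertical edge $v_{i,j}v_{i+1,j}$ has label difference exactly $2^{m+1}$, and finish with Observation~\ref{obs}~$(i)$. The only place you diverge is in how the within-polygon ordering is produced. The paper reuses the alternating (zigzag) pattern from its disjoint-union-of-cycles theorem, assigning the consecutive odd labels to $v_{i,1}, v_{i,2^m}, v_{i,2}, v_{i,2^m-1}, v_{i,3},\ldots$, which makes every polygon edge difference equal to $2$ or $4$ by inspection, with no auxiliary lemma at all. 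You instead build the ordering from the $m$-bit reflected binary Gray code via $s_j=2\gamma_{j-1}+1$, which requires an inductive lemma (one-bit changes along the list plus cyclic closure between the first and last codewords), but your claims there are all correct: consecutive differences $2\cdot 2^b$ are powers of two, $\gamma_0=0$ and $\gamma_{2^m-1}=2^{m-1}$ give the stated endpoints, and the cyclic gap is $2^m$. What the Gray code buys is a clean, reusable formulation of the ``cyclic arrangement of $\{1,3,\ldots,2^{m+1}-1\}$ with power-of-two gaps'' object (essentially the sequence $S(k)$ lemma left commented out in the source); what the paper's zigzag buys is brevity, since it needs no induction and keeps all in-polygon differences down to $2$ and $4$. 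You are also right about where the hypothesis enters: the zigzag (or Gray code) ordering exists for any even polygon, and the power-of-two restriction is used only to make the vertical shift $2^{m+1}$ a power of $2$ --- the same point at which the paper's proof uses it.
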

\begin{proof}
We label the vertices on the $i$th polygon with the values from $(i-1)2^{m+1}+1$ to $i\cdot 2^{m +1}-1$ in this order:
$$v_{i,1}, v_{i,2^m}, v_{i,2}, v_{i,2^m -1}, v_{i,3},\ldots, v_{i,2^{m-1}+2}, v_{i,2^{m-1}},  v_{i,2^{m-1}+1}.$$
Using this alternating pattern will ensure that all adjacent labels on each polygon will be separated by a power of $2$, particularly $2$ or $4$.  The difference between the labels $\ell(v_{i,1})$ and $\ell(v_{i+1,1})$ is
$$i\cdot 2^{m+1}+1-((i-1)2^{m+1}+1)=2^{m+1}.$$ Likewise, all other edges of the form $v_{i,j}v_{i+1,j}$ have labels differing $2^{m+1}$ as well. Hence all adjacent labels are relatively prime by Observation~\ref{obs} (i), making all $2^m$-sided polygonal stacked prisms odd prime.
\end{proof}

As one considers generalizing our previous results on stacked prisms to attempt to create an odd prime labeling of $Y_{k,n}$ for any $k\geq 3$, it is worth noting that the independence number of this graph is $\lfloor\frac{k}{2}\rfloor\cdot n$.  This is well above the required independence number from Lemma~\ref{ind req} of $\lfloor \frac{kn+1}{3}\rfloor$ to make an odd prime labeling possible.  

We believe our approach would work for any $k$ to find a permutation of the labels $1,3,\ldots, 2k-1$ for the first $k$-polygon, followed by a mapping to match these to the labels $2k+1,2k+3,\ldots, 4k-1$ with each difference being a power of 2.  The mapping could then be repeated for each subsequent $k$-polygon.  However, constructing this mapping for a general $k$ has eluded us, leaving the following as a conjecture.

\begin{conjecture}
The $k$-polygonal stacked prism, $Y_{k,n}$, is odd prime for all $k\geq 3$ and $n\geq 1$.
\end{conjecture}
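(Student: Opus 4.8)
The plan is to prove the conjecture by strengthening the coprimality requirement in exactly the way that already succeeded for $Y_{3,n}$, $Y_{5,n}$, $Y_{6,n}$, and $Y_{2^m,n}$: I will seek a labeling in which the labels on every edge differ by a power of $2$, so that Observation~\ref{obs}~(i) immediately gives relative primality. It is convenient to pass to the ``halved'' labels by writing each odd label as $2t+1$ with $t\in\{0,1,\ldots,kn-1\}$, since then a power-of-two difference between two labels corresponds to a power-of-two difference between the associated values $t$. The task becomes: find a bijection $g$ from the vertices of $Y_{k,n}=C_k\square P_n$ onto $\{0,1,\ldots,kn-1\}$ so that $|g(u)-g(v)|$ is a power of $2$ (allowing $2^0=1$) on every edge $uv$.

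I would build $g$ one polygon at a time, giving the $i$th polygon the consecutive block $\{(i-1)k,\ldots,ik-1\}$ and writing $g(v_{i,j})=(i-1)k+\pi_i(j)$, where each $\pi_i$ is a permutation of $\{0,\ldots,k-1\}$. The horizontal edges of the $i$th polygon are then controlled solely by $\pi_i$: I need each $\pi_i$ to be \emph{admissible}, meaning the cyclic differences of its values around $C_k$ are powers of $2$. Such arrangements exist for every $k$, since the alternating order $v_1,v_k,v_2,v_{k-1},\ldots$ from Theorem~\ref{unioncycles} produces cyclic differences of only $1$ and $2$. The vertical edge $v_{i,j}v_{i+1,j}$ has value-difference $k+\pi_{i+1}(j)-\pi_i(j)$, which is always positive and lies in $[1,2k-1]$; the requirement is that this be a power of $2$ for every $j$. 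Everything thus reduces to a walk in the finite directed graph $D_k$ whose vertices are the admissible arrangements and which has an edge $\pi\to\sigma$ precisely when $k+\sigma(j)-\pi(j)$ is a power of $2$ for all $j$. A labeling of $Y_{k,n}$ is the same as a directed walk on $n$ vertices in $D_k$, so it suffices to show $D_k$ contains a directed cycle; in fact it is enough to prove that every admissible $\pi$ has at least one out-neighbour, for then one may extend the walk indefinitely from any starting arrangement and obtain walks of every length at once.

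The heart of the matter is the following successor lemma: for each $k$ and each admissible $\pi$ there is an admissible $\sigma$ with all vertical differences powers of $2$. With $s=\lfloor\log_2 k\rfloor$ and $2^s\le k<2^{s+1}$, a natural first attempt is to force every vertical difference into $\{2^s,2^{s+1}\}$, choosing for each value $v=\pi(j)$ a target in $\{v+(2^s-k),\,v+(2^{s+1}-k)\}$; the two candidates differ by $2^s$ and the extreme values force part of the choice, turning the construction into an interval-matching problem. This already succeeds for the special shape $k=2^p(2^r-1)$, where the constant shift $\pi\mapsto\pi+c$ works because both $c$ and $k+c$ are powers of $2$ (the cases $k=3,6$ in the paper). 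However, it provably fails in general: for $k=5$ the only admissible differences toward $\{4,8\}$ are $-1$ and $3$, and the range constraints force two distinct values onto the same target, so smaller powers of $2$ must also be used — indeed the period-$5$ labeling of $Y_{5,n}$ mixes the differences $1$, $4$, and $8$. The genuine obstacle, and the reason a uniform construction has been elusive, is that $\sigma$ is doubly constrained: it must satisfy the vertical power-of-two condition \emph{and} remain admissible as a horizontal arrangement on $C_k$, and these interact through the cyclic wrap-around of the value set. My main effort would therefore go into exhibiting a structured family of admissible arrangements closed under a successor map, generalizing both the shift family and the $k=5$ pattern, most plausibly organized by the binary expansion of $k$ with the solved case $k=2^m$ as the base of an induction or doubling argument. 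Should the all-powers-of-two strengthening be impossible for some $k$, the fallback is to relax to genuine coprimality: since the independence number $\lfloor k/2\rfloor\cdot n$ comfortably exceeds the bound of Lemma~\ref{ind req}, there is ample room to place the multiples of $3$ and of the other small primes, and one would then need only to control the finitely many residues that can go wrong on the vertical and wrap-around edges.
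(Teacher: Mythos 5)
Your proposal does not prove the statement, and in fact it cannot be judged against a proof in the paper because the paper has none: this statement is left as an open conjecture precisely because the step you defer is the step the authors could not complete. Your reduction itself is sound and is essentially the paper's own stated plan (find an arrangement of the first $k$ labels with power-of-two differences around $C_k$, then a matching to the next block of $k$ labels with power-of-two vertical differences, repeated periodically); your reformulation in ``halved'' coordinates and as a walk in the directed graph $D_k$ of admissible arrangements is a clean and correct formalization of that plan, and your observation that it suffices for every admissible arrangement to have an out-neighbour is right. But the entire mathematical content of the conjecture is then concentrated in your ``successor lemma,'' which you do not prove. You candidly show why the obvious attempts fail (the constant shift works only for $k$ of the form $2^p(2^r-1)$; forcing vertical differences into $\{2^s,2^{s+1}\}$ provably collides for $k=5$), and then the proposal ends with ``my main effort would therefore go into exhibiting a structured family closed under a successor map'' --- that family is never exhibited, so no proof exists.

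The fallback you sketch also does not close the gap. Lemma~\ref{ind req} gives only a \emph{necessary} condition for a graph to be odd prime: knowing that $\beta(Y_{k,n})=\lfloor k/2\rfloor\cdot n$ comfortably exceeds $\lfloor (kn+1)/3\rfloor$ tells you there is room to place the multiples of $3$, but it provides no mechanism for constructing a labeling, and ``controlling the finitely many residues that can go wrong'' is not an argument --- the number of interacting congruence constraints grows with $k$ and $n$. So the status after your proposal is exactly the status in the paper: the approach is plausible, the small and structured cases ($k=3,5,6,2^m$) are settled, and the general successor/matching construction remains the open problem.
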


One well-studied class of graphs within the field of prime labeling is the set of grid graphs, $P_m\times P_n$.  It is conjectured that $P_m\times P_n$ is prime for all $m,n\geq 2$, and many partial results have been proven for certain cases of $m$ and $n$~\cite{V_S_N,S_P_S}, and in particular ladder graphs, $P_m\times P_2$, have been proven to be prime for any length $m$~\cite{Dean}. Ladder graphs were shown in~\cite{Y_A} to be odd prime, but grid graphs with higher values of $m$ have yet to be investigated for odd prime labelings. 

Our results on stacked prisms easily translate into odd prime labelings of grid graphs since $P_m\times P_n$ is the graph $Y_{m,n}$ with the edges between vertices $v_{i,n}$ and $v_{i,1}$ removed. Therefore, Lemma~\ref{subgraph} directly proves the following. 

\begin{corollary}
Grid graphs $P_m\times P_n$ are odd prime for $m=3, 5, 6,$ and $2^k$ with $k\geq 2$ and for any $n$.
\end{corollary}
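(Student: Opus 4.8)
The plan is to realize each grid graph $P_m\times P_n$ as a spanning subgraph of the corresponding stacked prism $Y_{m,n}=C_m\square P_n$ and then invoke Lemma~\ref{subgraph}, so that the odd prime labelings already constructed for the stacked prisms descend automatically to the grids.

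First I would make the structural identification precise. In the stacked prism $Y_{m,n}$, each of the $n$ layers is a copy of the cycle $C_m$ on vertices $v_{i,1},\dots,v_{i,m}$, whose edges are $v_{i,j}v_{i,j+1}$ for $1\le j\le m-1$ together with the single closing edge $v_{i,m}v_{i,1}$. Deleting exactly these $n$ closing edges, one per layer, turns every cycle $C_m$ into a path $P_m$ while leaving the inter-layer edges $v_{i,j}v_{i+1,j}$ untouched; the result is precisely $P_m\square P_n=P_m\times P_n$. Crucially, no vertices are deleted, so $P_m\times P_n$ sits inside $Y_{m,n}$ as a spanning subgraph on the same $mn$ vertices.

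Next I would assemble the cases for which $Y_{m,n}$ is already known to be odd prime: $m=3$ by Theorem~\ref{triangular}, $m=5$ by Theorem~\ref{pentagonal}, $m=6$ by the hexagonal stacked prism theorem, and $m=2^k$ with $k\ge 2$ by the $2^m$-sided stacked prism theorem, in each case for every $n\ge 1$. For each such $m$, Lemma~\ref{subgraph} guarantees that any spanning subgraph of the odd prime graph $Y_{m,n}$ is itself odd prime, and by the previous paragraph $P_m\times P_n$ is exactly such a subgraph. This immediately yields the claimed odd prime labelings.

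There is no genuine analytic obstacle here, since all of the labeling work was carried out in the stacked prism theorems; the only point requiring care is the structural claim that deleting solely the cycle-closing edges of $Y_{m,n}$ produces $P_m\times P_n$ rather than some other subgraph, and that this deletion preserves the full vertex set so that Lemma~\ref{subgraph} genuinely applies. Once that identification is verified, the corollary follows at once.
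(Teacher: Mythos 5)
Your proposal is correct and follows exactly the paper's own argument: the grid $P_m\times P_n$ is the stacked prism $Y_{m,n}$ with the cycle-closing edges deleted, hence a spanning subgraph, and Lemma~\ref{subgraph} together with the stacked prism theorems for $m=3,5,6,2^k$ gives the result. Your write-up is simply a more detailed version of the same reasoning.
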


\section{Trees}\label{trees}
Various classes of trees have been shown to have prime labelings, providing evidence for the conjecture by Entringer that all trees are prime.  With regards to odd prime labelings, only paths~\cite{P_S}, unions of paths, and star graphs (or $K_{1,n}$)~\cite{Y_A} have been shown to be odd prime.  We will construct odd prime labelings for several other classes of trees to support the following conjecture.

\begin{conjecture}
All trees are odd prime.
\end{conjecture}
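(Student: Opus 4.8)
The plan is to argue by induction on the number of vertices $n$, reducing a tree $T$ to a smaller tree by deleting a carefully chosen pendant structure and then reinserting it with the largest labels. A preliminary observation is that the counting obstruction of Lemma~\ref{ind req} never applies here: since trees are bipartite, one part of the bipartition is an independent set of size at least $\lceil n/2\rceil\geq\lfloor(n+1)/3\rfloor$, so $\beta(T)$ is comfortably above the required bound, and the difficulty is purely one of arranging the labels rather than of having enough room for the multiples of $3$. First I would dispatch the easy reductions: if $T$ has a pendant path of length at least two ending in a leaf, I would extend whatever labeling the trimmed tree receives by continuing along the path with labels differing by a power of $2$, so that Observation~\ref{obs}~(i) guarantees coprimality along the new edges, leaving only the single edge that rejoins the path to the rest of the tree to be checked.

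The crux is the inductive step when $T$ is \emph{leaf-reduced}, meaning every leaf hangs directly off a branch vertex. Here I would strengthen the inductive hypothesis so that the labeling of the smaller tree can be produced with a prescribed label at the attachment vertex, in the spirit of the book graph proof where Bertrand's postulate relocates a large prime to exactly where a gcd conflict occurs. Concretely, after deleting a leaf $v$ with neighbor $u$ and labeling $T-v$ on the odd integers $1,3,\ldots,2n-3$, I assign $\ell(v)=2n-1$. If $\gcd(2n-1,\ell(u))\neq 1$, I invoke Bertrand's postulate to locate a prime $p$ with $n<p<2n$; because $2p$ exceeds the largest label $2n-1$, this $p$ is coprime to every other label, so by swapping $p$ onto $u$ (and shifting the displaced label, using Observation~\ref{obs}~(ii)--(iii) to confirm the neighboring edges survive) I can force $\ell(u)=p$, which is then coprime to $\ell(v)$ and to everything else. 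Carrying this bookkeeping consistently across the branch vertices is what would drive the argument.

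The step I expect to be the main obstacle is controlling several high-degree branch vertices simultaneously. A single application of Bertrand's postulate repairs one conflict, but a branch vertex of degree $d$ needs $d$ labels all coprime to its own, and when two branch vertices lie close together, relocating a prime to fix one conflict can destroy coprimality at the other, so the repairs may cascade without terminating. This is precisely the phenomenon that keeps the analogous prime-labeling conjecture for trees open, and it suggests that the general case will require a \emph{global} assignment rather than a one-conflict-at-a-time induction: one would place the universal label $1$ together with a family of primes at the branch vertices all at once, and then argue, by a counting estimate in the spirit of Lemma~\ref{ind req}, that enough mutually coprime labels always remain for the leaves. Establishing the existence of such a global assignment for every tree is the heart of the conjecture.
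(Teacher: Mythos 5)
There is no ``paper's own proof'' to compare against here: the statement you are addressing is stated in the paper as an open \emph{conjecture}, and the paper supports it only by proving special families of trees (spiders, perfect binary trees, caterpillars of maximum degree at most $5$, $t$-toed caterpillars as in Theorem~\ref{t-toed caterpillar}, and firecrackers). Your proposal does not close the conjecture either, and the gap is exactly where you suspect it is, but it is worth naming precisely why the Bertrand-swap induction fails rather than merely ``may cascade.'' After you delete a leaf $v$ with neighbor $u$, label $T-v$ inductively, set $\ell(v)=2n-1$, and locate a prime $p$ among the used labels, placing $p$ on $u$ is indeed safe (since $2p$ exceeds every label, $p$ is coprime to all of them). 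The problem is the displaced label: the old label $x=\ell(u)$ must move to the vertex $z$ that previously carried $p$, and nothing forces $x$ to be coprime to the labels of the neighbors of $z$. In the book graph proof this relocation is harmless only because of the rigid structure of that graph: every page is a path of consecutive odd labels whose endpoints are adjacent to the fixed labels $1$ and $p$, so the shift moves labels along a path of consecutive odd integers and Observation~\ref{obs}~(i) applies edge by edge. An arbitrary tree has no such structure, so a single swap can create new conflicts at $z$, and there is no measure of progress that guarantees the repair process terminates. Equivalently, your ``strengthened inductive hypothesis'' --- that the smaller tree can be odd prime labeled with a \emph{prescribed} label at a prescribed vertex --- is not something the induction ever establishes; it is a substantially stronger statement than the conjecture itself (it fails as stated: e.g.\ one cannot prescribe the label $3$ at the center of a large star, since the center must be coprime to all other labels).

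Two smaller points. First, your preliminary observations are correct but only remove the known necessary condition: bipartiteness gives $\beta(T)\geq\lceil n/2\rceil\geq\lfloor (n+1)/3\rfloor$, so Lemma~\ref{ind req} never obstructs trees, and the pendant-path extension by consecutive odd labels is sound except for the single rejoining edge, which your argument leaves to the same unresolved swap mechanism. Second, your closing paragraph concedes that the leaf-reduced case ``is the heart of the conjecture'' and would need a global assignment you do not construct; that concession is accurate, and it means what you have written is a reduction-plus-heuristic, not a proof. If you want to contribute toward the conjecture in the spirit of the paper, the productive move is the one the paper makes: isolate a family of trees where the structure lets you control all conflicts at once --- for instance via the coprime matching theorem of Robertson and Small used in Theorem~\ref{t-toed caterpillar}, which is precisely a ``global assignment'' tool pairing the label set $\{1,3,\ldots,2n-1\}$ coprimely with a shifted block of odd labels --- rather than attempting a one-conflict-at-a-time induction on all trees.
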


We begin with two types of trees that were shown in~\cite{F_H} to have prime labelings. Spiders are a class of trees in which only one vertex has degree 3 or more.  It can also be viewed as a collection of paths $P_{n_1}, P_{n_2},\ldots,P_{n_k}$ with one end of each path adjoined to a central vertex.  Figure~\ref{spider} shows an example of spider with an odd prime labeling.

\begin{figure}[t]
\begin{center}
\includegraphics[width=7 cm]{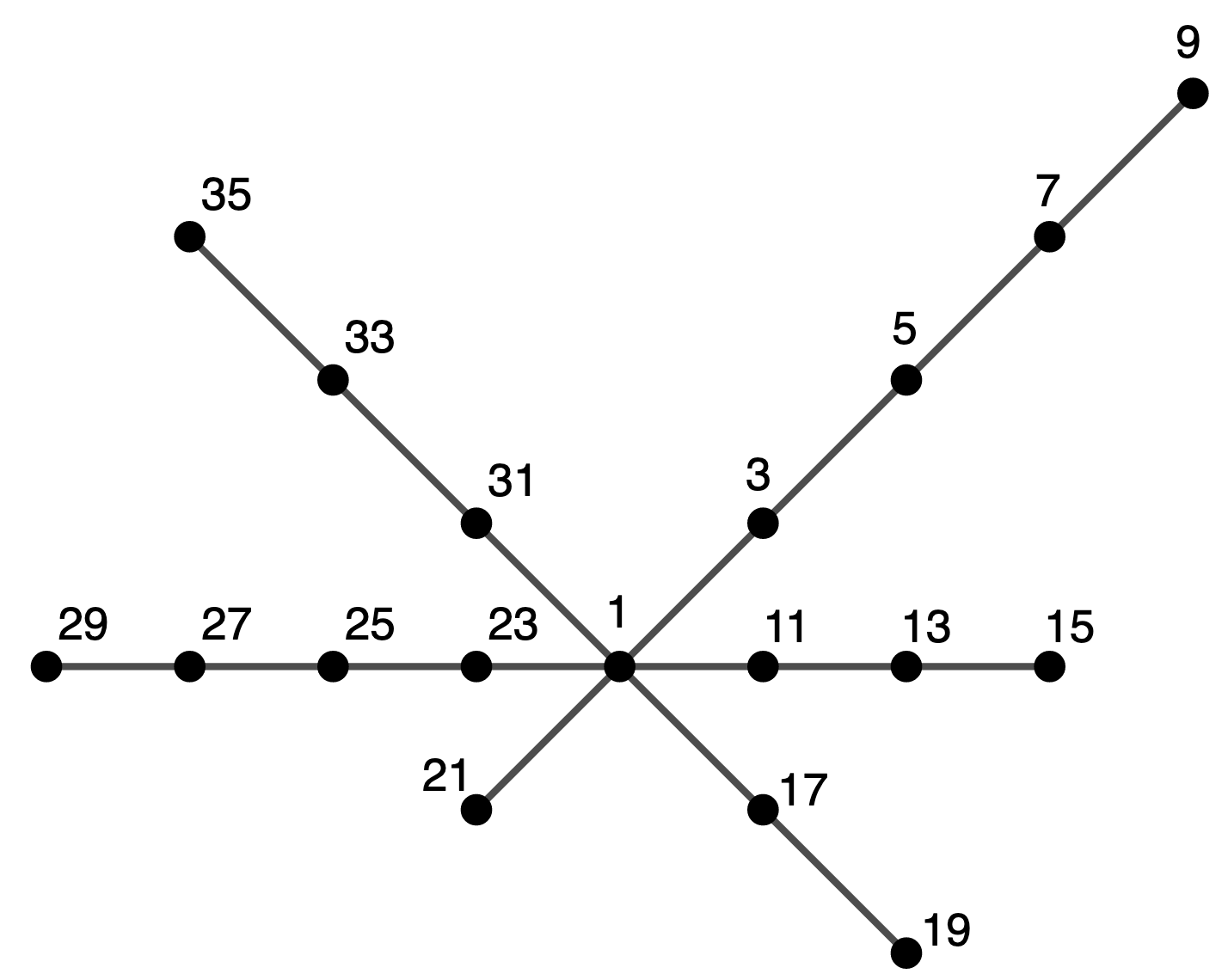}
\caption{An odd prime labeling of a spider graph}\label{spider} 
\end{center}
\end{figure}

\begin{theorem}
All spider graphs are odd prime.
\end{theorem}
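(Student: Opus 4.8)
The plan is to exploit the fact that the single high-degree vertex is the only source of difficulty and to neutralize it by assigning it the universal label $1$. First I would fix notation: call the central vertex $c$ and write the legs as paths with vertices $u_{i,1}, u_{i,2}, \ldots, u_{i,\ell_i}$ for $i=1,\ldots,k$, where $u_{i,1}$ is adjacent to $c$ and $u_{i,j}$ is adjacent to $u_{i,j+1}$. Since the spider has order $n = 1 + \sum_{i=1}^{k}\ell_i$, the labels available are the odd integers $1, 3, \ldots, 2n-1$.

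Next I would set $\ell(c)=1$ and then label the legs with consecutive odd integers, one leg at a time: assign $3, 5, \ldots, 2\ell_1+1$ to the vertices of the first leg reading outward from $u_{1,1}$, then continue with the next block of consecutive odd integers along the second leg, and so on. This uses each of $3, 5, \ldots, 2n-1$ exactly once, which matches the $\sum_{i=1}^{k}\ell_i = n-1$ non-central vertices, so the bookkeeping checks out.

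Then I would verify the two kinds of edges. Each edge $c\,u_{i,1}$ is relatively prime because $\gcd(1,\ell(u_{i,1}))=1$. Each edge $u_{i,j}u_{i,j+1}$ joins two consecutive odd integers, which differ by $2=2^1$ and are therefore relatively prime by Observation~\ref{obs}(i). Since every leg is labeled by an increasing run of consecutive odds and the center carries $1$, no other adjacencies remain to be checked, so the labeling is odd prime.

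I do not expect a genuine obstacle here: the entire difficulty of a tree with one high-degree vertex is concentrated at that vertex, and placing the label $1$ there makes all of its incident edges automatically coprime, reducing the rest of the graph to a disjoint collection of paths whose consecutive-odd labeling is immediate. The only point requiring care is the bookkeeping that the blocks of consecutive odd integers partition $\{3,5,\ldots,2n-1\}$ exactly, which follows directly from the vertex count above.
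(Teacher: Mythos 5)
Your proposal is correct and matches the paper's proof essentially verbatim: both place the label $1$ on the central vertex and then label each leg outward with consecutive blocks of odd integers, so that edges within a leg join consecutive odd numbers and edges at the center involve the label $1$. No further comparison is needed.
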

\begin{proof}
Begin by assigning the label 1 to the central vertex, which we will call $v$.  Then label the vertices of $P_{n_1}$ with $3,5,\ldots, 2(n_1+1)-1$ in order from the vertex adjacent to the $v$ outward to the leaf.  Continue labeling the other paths $P_{n_2}, P_{n_3},\ldots, P_{n_k}$ similarly with the lowest available label assigned to the vertex adjacent to $v$.  This labeling is odd prime since every pair of adjacent vertices within a path $P_{n_{i}}$ contains consecutive odd labels, and any adjacent pair that includes $v$ involves the label~$1$.
\end{proof}

Next we consider perfect binary trees, which are rooted trees in which all interior vertices have two children and all leaves are on the same level.  Note that these trees have been proven to be prime in~\cite{F_H}, but were referred to as a complete binary tree.  See Figure~\ref{binary} for an example of a perfect binary tree with an odd prime labeling.

\begin{figure}[t]
\begin{center}
\includegraphics[width=15 cm]{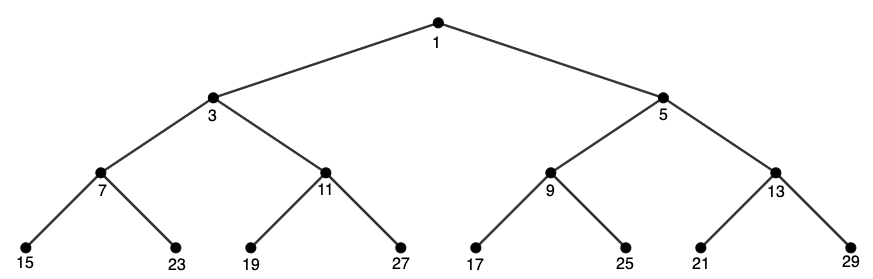}
\caption{An odd prime labeling of a perfect binary tree with four levels}\label{binary} 
\end{center}
\end{figure}

\begin{theorem}
All perfect binary trees are odd prime.
\end{theorem}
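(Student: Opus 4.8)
The plan is to exploit Observation~\ref{obs}~(i): if every edge of the tree joins vertices whose labels differ by a positive power of~$2$, then the adjacent labels are automatically relatively prime. The crucial simplification is that a perfect binary tree is a tree, so its only edges are parent--child edges; two siblings are \emph{not} adjacent, and hence need not receive coprime labels. Thus it suffices to produce a bijection from the vertices to $\{1,3,\ldots,2n-1\}$ in which each child's label differs from its parent's label by a power of~$2$.

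I would organize the labeling by levels. Write $L$ for the number of levels, so $n=2^{L}-1$, and number the levels $1,\ldots,L$, with level~$i$ containing $2^{i-1}$ vertices indexed by a position $q\in\{0,1,\ldots,2^{i-1}-1\}$. Assign to the vertex in position~$q$ of level~$i$ the label
$$\ell = 2^{i}-1+2q.$$
As $i$ ranges over $1,\ldots,L$ and $q$ over the allowed positions, these labels run through $1,3,5,\ldots,2^{L+1}-3=2n-1$ exactly once, so $\ell$ is a bijection onto the required label set.

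The key step is the choice of the parent--child adjacency. I would declare that the vertex in position~$q$ of level~$i$ is the parent of the two vertices in positions $q$ and $q+2^{i-1}$ of level~$i+1$. This is a genuine perfect binary tree structure: as $q$ runs over $0,\ldots,2^{i-1}-1$, the positions $q$ and $q+2^{i-1}$ cover every position of level~$i+1$ exactly once, so each vertex below the root has a unique parent and each non-leaf has exactly two children (and since all perfect binary trees on $L$ levels are isomorphic, labeling this explicit copy suffices). With this adjacency, the two child--parent label differences are
$$\big(2^{i+1}-1+2q\big)-\big(2^{i}-1+2q\big)=2^{i}$$
and
$$\big(2^{i+1}-1+2(q+2^{i-1})\big)-\big(2^{i}-1+2q\big)=2^{i+1},$$
both positive powers of~$2$, so Observation~\ref{obs}~(i) finishes the argument.

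The main obstacle is precisely this interleaved child assignment. The naive choice of pairing position~$q$ of level~$i$ with the consecutive positions $2q,2q+1$ of level~$i+1$ produces a label difference of $2^{i}+2q$ on one of the edges, which fails to be a power of~$2$ for most~$q$. Shifting the second child by the full block size $2^{i-1}$ instead is what forces both differences to collapse to the clean values $2^{i}$ and $2^{i+1}$, independently of~$q$; identifying this shift is the heart of the argument, after which the verification is routine.
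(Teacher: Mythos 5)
Your proof is correct and takes essentially the same approach as the paper: the paper labels level $k$ with the consecutive odd integers $\{2^k-1, 2^k+1, \ldots, 2^{k+1}-3\}$ and assigns the children of a vertex labeled $x$ the labels $x+2^k$ and $x+2^{k+1}$, which is precisely your labeling $\ell = 2^i-1+2q$ with child differences $2^i$ and $2^{i+1}$. The only distinction is presentational — the paper verifies this inductively level by level, while you state the closed-form formula and explicit position indexing directly.
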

\begin{proof}

Assume $n$ is the number of levels of the binary tree, and consider the vertices of level $i$ to be $v_{i,j}$, where $i=1, 2,\ldots, n$ and $j=1, 2,\ldots, 2^{i-1}$. We start with the following labels: $\ell(v_{1,1})=1$, $\ell(v_{2,1})=3$, and $\ell(v_{2,2})=5$.  Clearly the adjacent labels for the first two levels are relatively prime. We will proceed by induction on $n$.  Assume that the first levels $1$ to $k$ are labeled such that all adjacent pairs are assigned relatively prime labels and with level $k$ containing the labels $\{2^k-1$, $2^k+1$, \ldots, $2^{k+1}-3 \}$.  Note that our chosen labels for levels $1$ and $2$ also meet this second condition, so they can be viewed as our base case. 

We now label level $k+1$.  For each $v_{k,j}$ with $\ell(v_{k,j})=x$, we label its children $x+2^k$ and $x+2^{k+1}$. Adding $2^k$ to the labels of row $k$ results in the labels $\{2^{k+1}-1, 2^{k+1}+1,\ldots, 2^{k+1}+2^k-3\}$.  Similarly, adding $2^{k+1}$ generates labels $\{2^{k+1}+2^k-1, 2^{k+1}+2^k+1,\ldots, 2^{k+2}-3\}$. These $2$ sets of labels are disjoint and when combined form the set $\{2^{k+1}-1, 2^{k+1}+1,\ldots,  2^{k+2}-3\}$. The labels on $v_{i,j}$ and each child are relatively prime because they differ by $2^k$ or $2^{k+1}$.  Thus, by induction, the labeling $\ell$ is an odd prime labeling for any number of rows $n$.

\end{proof}

We next explore caterpillars, which are defined as a tree with a central path of $n$ vertices called the spine, and all other vertices are distance $1$ from the $n-2$ interior vertices of the spine.  Two particular classes of caterpillars were examined in the initial prime labeling paper by Tout et al~\cite{Tout}, the first of which being caterpillars with the highest degree vertex being bounded above by $5$.  Figure~\ref{caterpillar} includes such a caterpillar with an odd prime labeling, and the following result describes a method of labeling these caterpillars with small degree vertices.

\begin{figure}[t]
\begin{center}
\includegraphics[width=15 cm]{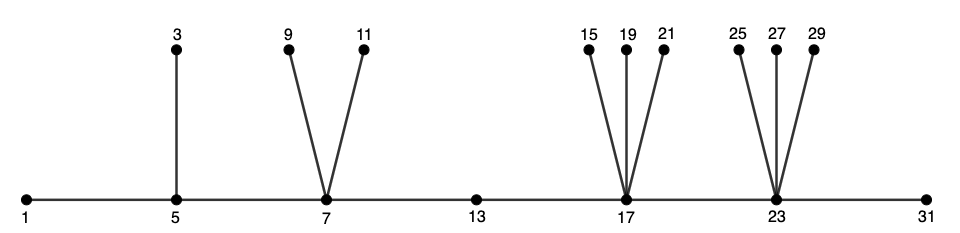}
\caption{An odd prime labeling of a caterpillar with vertices having at most degree 5}\label{caterpillar} 
\end{center}
\end{figure}

\begin{theorem}
All caterpillars with maximum degree at most $5$ are odd prime.
\end{theorem}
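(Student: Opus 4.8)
The plan is to build the labeling directly, sweeping along the spine from $s_1$ to $s_n$ and assigning to each spine vertex together with its leaves a block of consecutive odd integers, then invoking Observation~\ref{obs}$(i)$ to certify coprimality. Write the spine as $s_1,s_2,\ldots,s_n$ and recall that only the interior vertices $s_2,\ldots,s_{n-1}$ carry leaves; since each such vertex already has two spine neighbors and the maximum degree is $5$, it has at most three leaves. Thus the block for $s_i$ consists of $c_i+1$ consecutive odd integers, where $c_i\in\{0,1,2,3\}$ is the number of leaves at $s_i$, and $c_1=c_n=0$.

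Within each block I would place the center $s_i$ so that every leaf label differs from $\ell(s_i)$ by a power of $2$. When $c_i\le 2$ this holds for any placement, since in a block of length at most $3$ the extreme labels differ by only $2$ or $4$; when $c_i=3$ the block has length $4$, and I would put $s_i$ in one of the two middle positions so that its three leaf labels sit at offsets $-2,+2,+4$ or $-4,-2,+2$, again all powers of $2$. By Observation~\ref{obs}$(i)$ every center--leaf edge then joins coprime labels, and the leaves impose no further constraints because a leaf is adjacent only to its center.

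This reduces the problem to the spine edges $s_is_{i+1}$. Because consecutive blocks occupy consecutive odd integers, $\ell(s_{i+1})-\ell(s_i)=2m_i$ with $m_i=c_i+1+q_{i+1}-q_i$ a small positive integer, where $q_j$ records the position of the center inside block $j$; the leftover freedom in the $q_j$ is exactly what I would use to control these differences. By Observation~\ref{obs}$(i)$ any spine edge with $m_i$ a power of $2$ is automatically between coprime labels, so I would sweep the blocks from left to right and at each step choose the center position of the next block so as to keep $m_i\in\{1,2,4\}$.

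The main obstacle is that this is not always possible: the admissible center positions are constrained (for three leaves only the two middle positions survive, in order to protect the leaf edges), and when a long block abuts a neighbor with no leaves the forced difference $2m_i$ need not be a power of $2$. The densest and unavoidable such case is a gap of $6$, and in general one is left with a small even difference carrying a single small odd prime factor, so the spine edge is safe unless $\ell(s_i)$ happens to share that factor; since multiples of $3$ are the densest, the essential bad case is a gap of $6$ adjacent to a center divisible by $3$. I would dispose of these finitely many conflicts with a swap in the spirit of the prism and book-graph proofs, exchanging the offending center label with a leaf label in a neighboring block, which is legitimate precisely because a leaf constrains nothing beyond coprimality with its own center. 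Verifying that such swaps can be carried out consistently along the whole spine, without creating new coincidences, is where I expect most of the work to lie; the degree bound of $5$, which caps every block at length $4$, is exactly what keeps this case analysis finite and, I believe, manageable.
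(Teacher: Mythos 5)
There is a genuine gap, and it sits exactly where you say you expect ``most of the work to lie.'' Your block-sweeping setup and the treatment of center--leaf edges are sound, but the spine edges are not handled: as you observe, the positional freedom in the $q_j$ is sometimes insufficient (for instance the spine pattern degree~$2$, degree~$5$, degree~$2$ forces one spine edge to have labels differing by $6$ no matter which middle position you choose), and your remedy---swapping the offending center label with a leaf label in a neighboring block---is only sketched, not proved. The difficulty is real: such a swap changes the center--leaf offsets in the neighboring block (possibly destroying the power-of-$2$ property there) and changes both spine differences incident to that block, so conflicts can cascade; moreover the conflicts are not ``finitely many'' in any useful sense, since their number grows with the length of the caterpillar, so a case check of isolated conflicts does not close the argument. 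As written, the proposal reduces the theorem to an unverified claim that the swaps can always be performed consistently, which is the whole content of the theorem.

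For comparison, the paper's proof avoids this obstruction by choosing the center \emph{arithmetically} rather than \emph{positionally}: within each block of consecutive odd labels, the spine vertex receives the smallest label that is not a multiple of $3$ (degree $3$) or of $3$ and $5$ (degrees $4$ and $5$), and the remaining labels go to the leaves. Leaf edges are then safe because differences are $2$, $4$, or $6$ with the center coprime to $3$; and for spine edges one never needs power-of-$2$ differences at all---instead one bounds the difference of consecutive spine labels by $14$ and rules out each possible common prime factor: a common factor of $3$ forces both endpoints to have degree $2$, hence consecutive labels; a common factor of $5$ would need a difference of at least $10$, impossible when both degrees are at most $3$; and a common factor of $7$ would need a difference of exactly $14$, which is shown to contradict the ``smallest admissible label'' rule. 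If you want to rescue your approach, the cleanest fix is to import exactly this idea: when a difference of $6$ (or other non-power of $2$) is unavoidable, choose the center inside its block to dodge the relevant small prime divisor, rather than trying to repair the labeling afterwards by swaps.
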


\begin{proof}
Assume the caterpillar has $n$ vertices on its spine, and let those vertices be $v_1$, $v_2$,\ldots,$v_n$. Begin by labeling $v_1$ as $\ell(v_1)=1$, so $\gcd(\ell(v_1),\ell(v_2))=1$ regardless of what the label of $v_2$ is. We will sequentially label each $v_i$ from $i=2$ to $n-1$ along with its adjacent leaves, where we consider $m$ to be the lowest unused odd integer thus far.
\begin{itemize}
\item[--] If $\deg(v_i)=2$, assign $\ell(v_i)=m$. 

\item[--] If $\deg(v_i)=3$, then at least one of $m$ or $m+2$ is not a multiple of $3$.  Assign the smallest such value as $\ell(v_i)$ and the other label to its leaf. 

\item[--]  If $\deg(v_i)=4$, then one of $m$, $m+2$, and $m+4$ is a multiple of $3$, and at most one is a multiple of $5$.  Therefore, at least one of $m$, $m+2$, and $m+4$ is neither a multiple of $3$ or $5$. Assign the smallest such value as $\ell(v_i)$ and the other two labels to its leaves. 

\item[--] If $\deg(v_i)=5$, at most two of $m$, $m+2$, $m+4$, and $m+6$ are a multiple of $3$, and at most one is a multiple of $5$.  Hence at least one of these is not a multiple of $3$ or $5$. Assign the smallest such integer as $\ell(v_i)$, and the remaining labels to its three leaves. 

\end{itemize}

Finally we assign $\ell(v_n)$ to be the next available odd integer. For each edge $v_iu$ where $u$ is a leaf, $\gcd(\ell(v_i),\ell(u))=1$ since their difference is $2$ or $4$ with one possible exception.  In the case of $\deg(v_i)=5$, we may have that the adjacent labels are $m$ and $m+6$. Since the label of $v_i$ is selected to not be a multiple of $3$, then we know $\gcd(m,m+6)=1$ as well. 

For edges of the form $v_iv_{i+1}$ with $i=2, 3, \ldots, n-2$, we consider each possible common prime factor.  The only case when $3$ divides $\ell(v_i)$ is if $\deg(v_i)=2$, since we avoided multiples of $3$ on the spine with higher degree vertices.  Then $3$ can only be a common factor of $\ell(v_i)$ and $\ell(v_{i+1})$ if $\deg(v_i)=\deg(v_{i+1})=2$, but in that case $\ell(v_i)$ and $\ell(v_{i+1})$ would be consecutive odd integers and hence won't have a common factor of $3$. 

A common factor of $5$ would require $\deg(v_i)$ and $\deg(v_{i+1})$ to be at most $3$ since multiples of $5$ were avoided for vertices with degree $4$ or $5$. This implies the greatest difference between $\ell(v_i)$ and $\ell(v_{i+1})$ could be $6$, but a difference of at least $10$ would be necessary to share a common factor of $5$.

In order to have a common factor of $7$, we would need $\ell(v_{i+1})-\ell(v_i)=14$. This can only occur if $\deg(v_i)=\deg(v_{i+1})=5$ with $\ell(v_i)=m$ and $\ell(v_{i+1})=m+14$, where the leaves of $v_{i+1}$ would be labeled as $m+8$, $m+10$, and $m+12$. However, at least one of those $3$ labels is not a multiple of $3$ or $5$, and being less than $m+14$, it would have been assigned as $\ell(v_{i+1})$.  This precludes $\ell(v_i)$ and $\ell(v_{i+1})$ from having a common factor of $7$. 

Since our labeling is created with the largest difference possible in the labels for $v_i$ and $v_{i+1}$ being $14$, their labels cannot share a prime factor larger than $7$.  Thus, $\gcd(\ell(v_i),\ell(v_{i+1}))=1$. for $i=2,3,\ldots, n-2$.

Lastly, we must justify why $v_{n-1}$ and $v_n$ are labelled by relatively prime integers. The construction of our labeling implies that the only way a multiple of $3$ would be a label on the spine is if the degree of $v_{n-1}$ was $2$, but then $\ell(v_n)$ would be the next consecutive odd integer and thus $\gcd(\ell(v_{n-1}), \ell(v_n))=1$. If $\deg(v_{n-1})>2$, a non-multiple of $3$ would have been chosen as its label. Then if the $\ell(v_n)$ is a multiple of $3$, we know $\ell(v_{n-1})$ would not be a multiple of $3$. For common prime factors of 5 or higher, we note that since $\deg(v_{n-1})\leq 5$, the largest difference between the labels of $v_{n-1}$ and $v_n$ is 8. Thus no common factors of 5 or higher can exist, implying $\gcd(\ell(v_{n-1}), \ell(v_n))=1$.

Therefore, all adjacent vertices have relatively prime labels, so there exists an odd prime labeling for all caterpillars with maximum degree at most 5.
\end{proof}

We next examine a second set of caterpillars in which the interior vertices of the tree have the same degree.  Referring to the leaves of these caterpillars as toes, we refer to a $t$-toed caterpillar as one where each interior vertex has degree $t+2$. Two examples of $t$-toed caterpillars can be seen in Figure~\ref{caterpillar2} complete with an odd prime labelings.  

\begin{figure}[t]
\begin{center}
\includegraphics[width=14 cm]{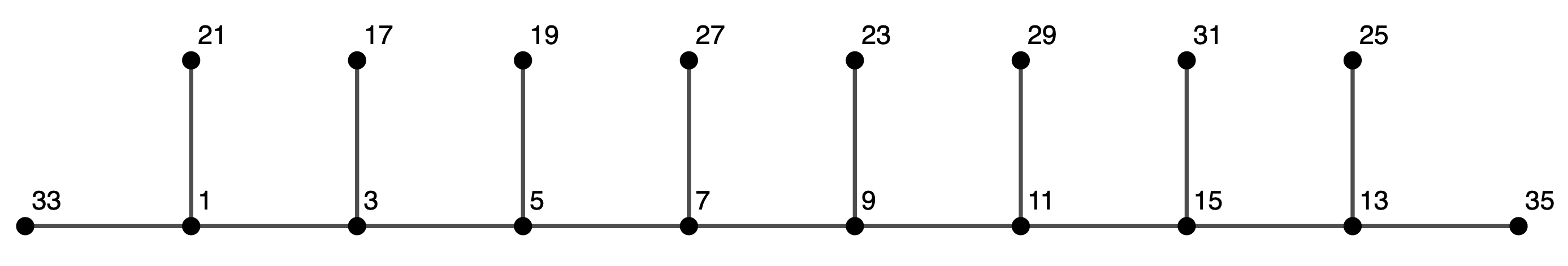}
\includegraphics[width=11 cm]{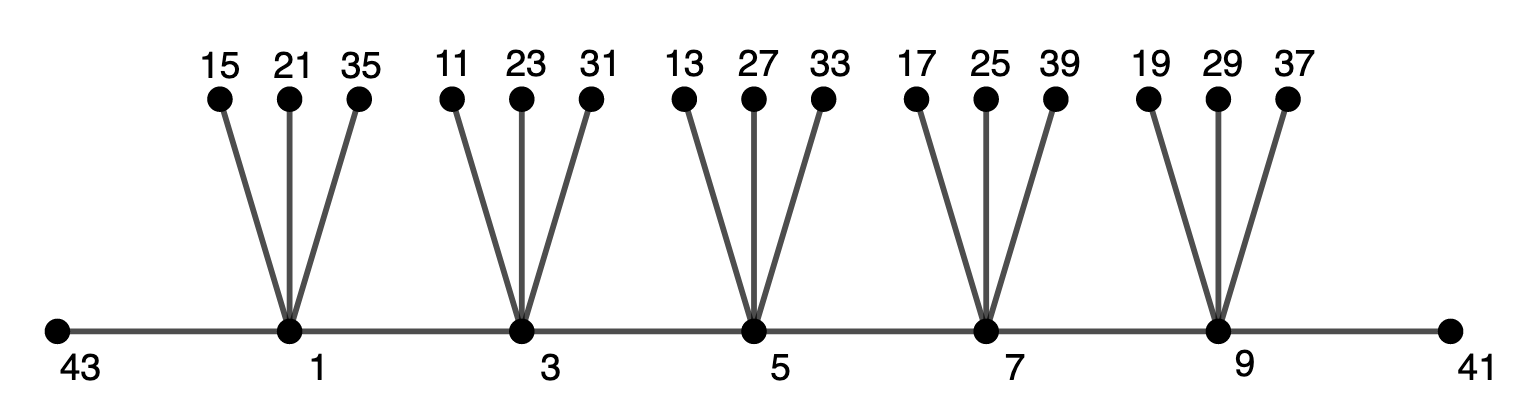}
\caption{Odd prime labelings of two $t$-toed caterpillars, one of length $10$ and $t=1$ where a reordering of the spine labels is needed, and one with length $7$ and $t=3$.}\label{caterpillar2} 
\end{center}
\end{figure}

This particular caterpillar was also shown in~\cite{Tout} to have a prime labeling for any length and any number of toes, where a coprime matching theorem developed in~\cite{Po_Se} was utilized to pair labels on the spine to the ones on the toes.   We use a similar approach relying on a result by Robertson and Small, particularly Theorem 3 in~\cite{Robertson_Small}.  They proved that given integers $a$ and $b$ which share no odd prime divisors less than $2n-1$, there exists a mapping $h$ from the set of the first $n$ odd integers, $O=\{1+2r \;|\; 0\leq r\leq n-1\}$, to the set $S=\{a+rb\;|\; 0\leq r\leq n-1\}$ in which $\gcd(m,h(m))=1$ for all $m\in O$.  We will also rely on the following reordering lemma to aid in this proof.


\begin{lemma}\label{reordering}
Given a sequence of $n+1$ consecutive odd integers $\{x_i\}_{i=1}^n = \{m, m+2,\ldots, m+2n\}$ and a specified entry $x_j$, there exists a reordering into the sequence $\{x_{i_1}, x_{i_2},\ldots, x_{i_n}, x_j\}$ such that each successive difference is a power of $2$.  That is, for each $k=2,3,\ldots, n$, $|x_{i_k}-x_{i_{k-1}}|=2^{s}$ and $|x_{j}-x_{i_n}|=2^t$ for some integers $s,t\geq 1$.
\end{lemma}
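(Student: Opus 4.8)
The plan is to first strip away the arithmetic and reduce the statement to a purely combinatorial one about Hamiltonian paths. Subtracting $m$ from every entry and then dividing by $2$ turns $\{m,m+2,\dots,m+2n\}$ into $\{0,1,\dots,n\}$, and a difference of $2^s$ with $s\geq 1$ becomes a difference of $2^{s-1}$, that is, an arbitrary power of two including $2^0=1$. The specified entry $x_j$ becomes a target $t\in\{0,1,\dots,n\}$. So it suffices to show that in the graph on vertex set $\{0,1,\dots,n\}$ in which $i$ and $j$ are adjacent exactly when $|i-j|$ is a power of two, there is a Hamiltonian path ending at $t$. Two features of this graph will be used throughout: consecutive integers are always adjacent (difference $1$), so the monotone orders $0,1,\dots,n$ and $n,n-1,\dots,0$ immediately settle the extreme targets $t=n$ and $t=0$.

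For a general target I would induct using a dyadic splitting, which is best formalized through the companion claim stated below. Let $2^p$ be the largest power of two with $2^p\leq n$, and split $\{0,\dots,n\}$ into the full block $A=\{0,1,\dots,2^p-1\}$ and the shorter block $B=\{2^p,\dots,n\}$, a translate of the smaller instance $\{0,\dots,n-2^p\}$. The blocks are joined by the ``vertical'' edges $i\leftrightarrow i+2^p$ for $0\leq i\leq n-2^p$. To produce a path ending at $t$, I would first traverse the block that does \emph{not} contain $t$, cross a single vertical edge, and then traverse the block containing $t$, finishing at $t$, with the inductive hypothesis supplying each sub-traversal. The important design choice is to route every crossing through the minimal vertex, along the edge $0\leftrightarrow 2^p$, so that each sub-path has one free endpoint and only one prescribed endpoint.

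The reason the crossing must be arranged so carefully is also the main obstacle: a Hamiltonian path with \emph{both} endpoints prescribed need not exist here. For instance, $\{0,1,2,3\}$ has no power-of-two Hamiltonian path from $1$ to $2$, since the only candidate interior vertices $0$ and $3$ are non-adjacent. Hence the induction cannot simply demand a path between two fixed points of a sub-block, and the whole argument hinges on keeping at least one endpoint of every sub-path free. To make the recursion self-supporting I would prove alongside it the companion claim that for every $N\geq 1$ and every $t\neq 0$, the interval $\{0,1,\dots,N\}$ admits a power-of-two Hamiltonian path \emph{starting at $0$} and ending at $t$. The desired lemma then follows by handling $t=0$ with the descending order and every other $t$ with this companion path.

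The delicate case of the companion claim is when $t$ lies in the lower block $A$, since then both the start $0$ and the end $t$ sit in the same block while all of $B$ must be inserted in between. Here I would use a detour trick: take an inductive from-$0$ path $0,v_2,v_3,\dots,t$ on $A$ and replace its first edge $0\to v_2$ by the excursion $0\to 2^p$, then a traversal of all of $B$ ending at $v_2+2^p$, then $v_2+2^p\to v_2$, where the middle portion is itself a from-$0$ Hamiltonian path of $B$ furnished by induction. Because the excursion begins at $0$, the part inside $B$ only needs a single prescribed endpoint, so the induction closes. The remaining work, which I expect to be the fussiest part, is confirming the small boundary cases (in particular $b=n-2^p+1=1$, where $B$ is a single vertex and the first-edge detour degenerates) and checking that the vertical partner $v_2+2^p$ of the chosen second vertex actually lies in $B$; these are elementary but demand care to avoid gaps of exactly the $1$-to-$2$ type noted above.
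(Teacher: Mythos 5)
Your reduction to a Hamiltonian-path problem on $\{0,1,\dots,n\}$ with power-of-two differences is correct, and your $\{0,1,2,3\}$ example (no path from $1$ to $2$) correctly isolates the central difficulty: you cannot prescribe both endpoints of a sub-traversal. But the inductive step you describe fails concretely in two places. First, the stated crossing rule --- ``route every crossing through the minimal vertex, along the edge $0\leftrightarrow 2^p$'' --- is impossible in the companion-claim case $t\in B$: the path is required to \emph{start} at $0$, so the traversal of $A$ begins at $0$ and cannot also end at $0$ in order to use that edge. One can repair this by crossing along $2^p-1\to 2^p$ instead (taking the inductive $A$-path from $0$ to $2^p-1$), but then the target $t=2^p$ needs a separate construction, since in the translate of $B$ it corresponds to the excluded target $0$. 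Second, in your delicate case $t\in A$, the requirement $v_2+2^p\le n$ is not a boundary nuisance but a genuine hole: your induction hypothesis gives you no control whatsoever over the second vertex $v_2$ of the $A$-path, and when $n=2^p$ (so $B=\{2^p\}$) \emph{no} vertex $v_2\ge 1$ has a vertical partner at all --- the only way to splice in $2^p$ is the pair of edges $0\to 2^p\to 2^{p-1}$, which forces $v_2=2^{p-1}$. Arranging $v_2=1$ in general requires strengthening the claim (e.g., applying the hypothesis to the translate $\{1,\dots,2^p-1\}$ and prepending $0$), and even that is unavailable when $t=1$, which then needs yet another ad hoc path such as $0,n,n-1,\dots,1$ when $n=2^p$. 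So what you defer as ``elementary but demand care'' is really several missing constructions and a strengthened induction hypothesis; the skeleton is salvageable, but the proof is not complete as written.

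For comparison, the paper avoids induction entirely with a half-page direct construction: writing the gap to the top of the interval in binary, $m+2n-x_j=2^{a_1}+\cdots+2^{a_b}$ with $a_1>\cdots>a_b\ge 1$, it deletes from the ascending sequence the partial sums $x_j,\; x_j+2^{a_1},\;\dots,\; x_j+2^{a_1}+\cdots+2^{a_{b-1}}$ (no two of which are consecutive entries, so the surviving ascending list has gaps $2$ or $4$) and then appends the deleted entries in reverse order after $m+2n$, where the successive differences are exactly $2^{a_b},2^{a_{b-1}},\dots,2^{a_1}$. The binary expansion accomplishes in one stroke what your dyadic recursion attempts level by level, and it is worth seeing how it sidesteps the both-endpoints-prescribed obstruction that your recursion keeps running into.
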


\begin{proof}
If $j=n$, then the original sequence does not need to be reordered since every difference is~$2$ initially.  For $j<n$, we consider the binary expansion of the difference of the last term and our specified entry: $m+2n-x_j=2^{a_1}+2^{a_2}+\cdots+2^{a_b}$.  We can assume $a_1>a_2>\cdots >a_b\geq 1$ where the last inequality is true since $m+2n$ and $x_j$ are both odd.  

We first alter the sequence by removing the entries $x_j, x_j+2^{a_1}, x_j+2^{a_1}+2^{a_2}, \ldots, x_j+2^{a_1}+\cdots+2^{a_{b-1}}$ and will refer to the remaining entries as $x_{i_1},x_{i_2},\ldots, x_{i_k},m+2n$.  Observe that since the exponents in the binary expansion were each greater than 1, no pair of consecutive entries were removed from the sequence.  Therefore, the successive differences of the shortened sequence are either 2 or 4.

We now append our removed entries at the end, but in reverse order, to create the following sequence: $$x_{i_1},x_{i_2},\ldots, x_{i_k},m+2n, x_j+2^{a_1}+\cdots+2^{a_{b-1}}, \ldots, x_j+2^{a_1}+2^{a_2}, x_j+2^{a_1}, x_j.$$
The pair $m+2n$ and $x_j+2^{a_1}+\cdots+2^{a_{b-1}}$ has a difference of $2^{a_b}$, and the subsequent pairs differ by each successive power of $2$ from the binary expansion. Thus, we have reordered the sequence with $x_j$ as the last term with every successive pair differing by a power of $2$.
\end{proof}

\begin{theorem}\label{t-toed caterpillar}
All $t$-toed caterpillars are odd prime for any length and any number of toes $t$.
\end{theorem}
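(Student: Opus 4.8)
The plan is to separate the two kinds of edges in a $t$-toed caterpillar and control each with a different tool. Writing $N = n + t(n-2)$ for the order (so the labels are $1,3,\ldots,2N-1$), the only adjacencies are spine edges $v_iv_{i+1}$ and pendant edges joining an interior vertex $v_i$ to one of its $t$ toes. First I would label the spine with the smallest odd integers: give the $n-2$ interior vertices $v_2,\ldots,v_{n-1}$ the first $n-2$ odd integers $1,3,\ldots,2n-5$ in increasing order, and give the endpoints $v_1,v_n$ the next two odd integers $2n-1$ and $2n-3$. Consecutive interior labels then differ by $2$, so those spine edges are coprime by Observation~\ref{obs}(i); the edge $v_1v_2$ is coprime because $\ell(v_2)=1$; and $v_{n-1}v_n$ is coprime because $2n-5$ differs from either $2n-3$ or $2n-1$ by a power of two. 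In cases where a plain assignment of the endpoint labels collides with a common factor, the Reordering Lemma (Lemma~\ref{reordering}) is exactly the device for permuting the interior spine labels so that a convenient value sits next to $v_n$ while all successive differences stay powers of two — this is the adjustment displayed for the $t=1$ example in Figure~\ref{caterpillar2}.

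For the pendant edges the key step is a coprime matching of toe labels to their parents, and here Theorem~3 of~\cite{Robertson_Small} does the work. The remaining odd labels $2n+1,2n+3,\ldots,2N-1$ number exactly $t(n-2)$, and I would split them into $t$ rows, each an arithmetic progression of length $n-2$ (one toe per interior vertex per row). Taking each row to be a run of $n-2$ consecutive odd integers is the cleanest choice, since then the Robertson--Small hypothesis — that the progression's initial term and common difference share no odd prime divisor below $2(n-2)-1$ — is automatic, the difference being $2$, which has no odd prime divisors. Because the interior spine labels are precisely the first $n-2$ odd integers, Theorem~3 applies directly and yields, for each row $k$, a coprime bijection $h_k$ from the interior spine labels onto that row; I then declare $h_k(\ell(v_i))$ to be the label of the $k$-th toe of $v_i$. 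By construction $\gcd(\ell(v_i),h_k(\ell(v_i)))=1$, so every pendant edge is coprime, and since the rows partition the leftover labels, each label is used exactly once.

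The step I expect to be the genuine obstacle is the interface between the two tools rather than either tool alone: I must guarantee that the set handed to Robertson--Small is \emph{exactly} the first $n-2$ odd integers (so its hypothesis is met and its bijection lands where intended) while simultaneously peeling off the two toe-less endpoints $v_1,v_n$ and keeping both boundary spine edges coprime. Balancing the label budget so that the interior spine, the two endpoints, and the $t$ toe-rows together consume $\{1,3,\ldots,2N-1\}$ with no collisions, and dispatching the small-$n$ cases where the generic endpoint assignment fails and the Reordering Lemma must supply an explicit fix, is where the real care lies. If one instead prefers geometrically natural toe-rows with a larger common difference — for instance striping the leftover block with stride $t$, giving difference $2t$ — the obstacle resurfaces sharply as the need to verify the Robertson--Small gcd condition, namely that no odd prime below $2(n-2)-1$ divides both a row's initial term and $t$.
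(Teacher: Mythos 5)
Your proof is correct, and it shares the paper's central tool---the coprime matching theorem of Robertson and Small (Theorem 3 of~\cite{Robertson_Small}), applied with common difference $b=2$ to match the interior spine labels against each run of $n-2$ consecutive odd toe labels---but it handles the two toe-less spine endpoints in a genuinely different and simpler way. The paper labels the interior spine with the first odd integers and assigns the two \emph{largest} labels to the endpoints; since both of those can share a factor with the last interior label (e.g.\ $15$ against $33$ and $35$), the paper must argue that one of the two large labels is coprime to some interior label $2k-1$, move that label to the end of the spine, and invoke Lemma~\ref{reordering} to repair the interior ordering so that successive differences remain powers of $2$. Your scheme sidesteps this entirely: giving the endpoints the odd integers $2n-3$ and $2n-1$ immediately after the interior block $1,3,\ldots,2n-5$ makes the one nontrivial boundary edge $v_{n-1}v_n$ carry labels differing by $2$ or $4$ regardless of which endpoint gets which label, so coprimality is automatic by Observation~\ref{obs}~(i). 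In particular, your hedge that Lemma~\ref{reordering} might still be needed for a ``collision'' is vacuous---no collision can occur in your assignment---so that lemma, which the paper proves solely to support this theorem, can be dropped from your argument altogether. Your closing worry about the interface is likewise already settled by your own construction: the interior spine labels are exactly the first $n-2$ odd integers, the Robertson--Small hypothesis holds trivially because $b=2$ has no odd prime divisors, and the label budget $(n-2)+2+t(n-2)=N$ is exact, so nothing further needs dispatching. What the paper's longer route buys is essentially only Lemma~\ref{reordering} itself, which may have independent interest; your route proves the same theorem with strictly less machinery.
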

\begin{proof}
Consider a caterpillar with length $n+2$ in which the $n$ interior vertices are of degree $t+2$.  Then there are $n(t+1)+2$ vertices, giving us a labeling set of $\{1,3,\ldots, 2n(t+1)+1, 2n(t+1)+3\}$.  We refer to the vertices on the spine as $u, v_1,v_2,\cdots, v_n, w$ and the toes from interior vertex $v_i$ as $x_{i,1}, x_{i,2}, \ldots, x_{i,t}$.

We begin our labeling $\ell$ by assigning $\ell(v_i)=2i-1$ for each $i=1,2,\ldots, n$.  Our two largest labels, $2n(t+1)+1$ and $2n(t+1)+3$, will be used to label the ends of the spine, vertices $u$ and $w$.  However, it is possible that both of these labels share a common factor with $\ell(v_n)=2n-1$.  Additionally, if $t$ is sufficiently large compared to $n$, one of the two labels may share a common factor with each label from $3$ to $2n-1$.  Even in this most extreme case, the other of $2n(t+1)+1$ or $2n(t+1)+3$ must be relatively prime with $3$ since these two largest labels differ by 2. Hence, we can conclude that one of $2n(t+1)+1$ and $2n(t+1)+3$ will be relatively prime with at least one of the interior labels $3,5,\ldots, 2n-1$, say the label $2k-1$ for some $2\leq k\leq n$.  Then we relabel $\ell(v_n)=2k-1$, assign as $\ell(w)$ whichever of $2n(t+1)+1$ and $2n(t+1)+3$ is relatively prime with $2k-1$, and assign the other of those two labels to $u$. By Lemma \ref{reordering}, we can reorder the labels $3,5,\ldots, 2n-1$ on the interior vertices to maintain $\ell(v_1)=1$, make this reassignment of $\ell(v_n)=2k-1$, and have the labels of each pair of adjacent vertices differ by a power of $2$.  Note that the vertex $u$ is only adjacent to $v_1$, which is labelled by $1$, so our current labeling of the spine has relatively prime labels at each adjacent pair of vertices.

For each $j=1, 2,\ldots, t$, we next assign the labels $2nj+1,2nj+3,\ldots, 2n(j+1)-1$ to the vertices $x_{1,j},x_{2,j},\ldots, x_{n,j}$.  By Theorem 3 in~\cite{Robertson_Small}, there is a function $h$ pairing the set of labels of the interior vertices $\{1,3,\ldots 2n-1\}$ with the set $\{2nj+1,2nj+3,\ldots, 2n(j+1)-1\}$ such that $\gcd(m,h(m))=1$ for all $m\in \{1,3,\ldots 2n-1\}$.  Notice that we are applying this theorem with $a=2nj+1$ and $b=2$, so clearly the condition about odd prime divisors of these two integers is met.  Thus, for a vertex $v_i$ with $\ell(v_i)=m$, we assign $\ell(x_{i,j})=h(m)$, resulting in this interior vertex and its adjacent leaf having relatively prime labels.  Continuing this for each $j$ up to the value $t$ will label all the toes of the caterpillar, ending with the label $2n(t+1)-1$.  Thus, we have used the labels $1,3,\ldots, 2n(t+1)-1, 2n(t+1)+1, 2n(t+1)+3$, and every pair of adjacent vertices is labelled with relatively prime integers, proving our labeling is odd prime.
\end{proof}

The last tree we will examine is a firecracker graph, denoted $F_{n,k}$.  This graph consists of a path with $n$ vertices, in which each vertex in the path is joined by an edge with the center of a $(k-1)$-star.  Firecrackers were shown in~\cite{Robertson_Small} to be prime.  See Figure~\ref{firecracker} for an example of a firecracker graph with an odd prime labeling.

\begin{figure}[t]
\begin{center}
\includegraphics[width=10 cm]{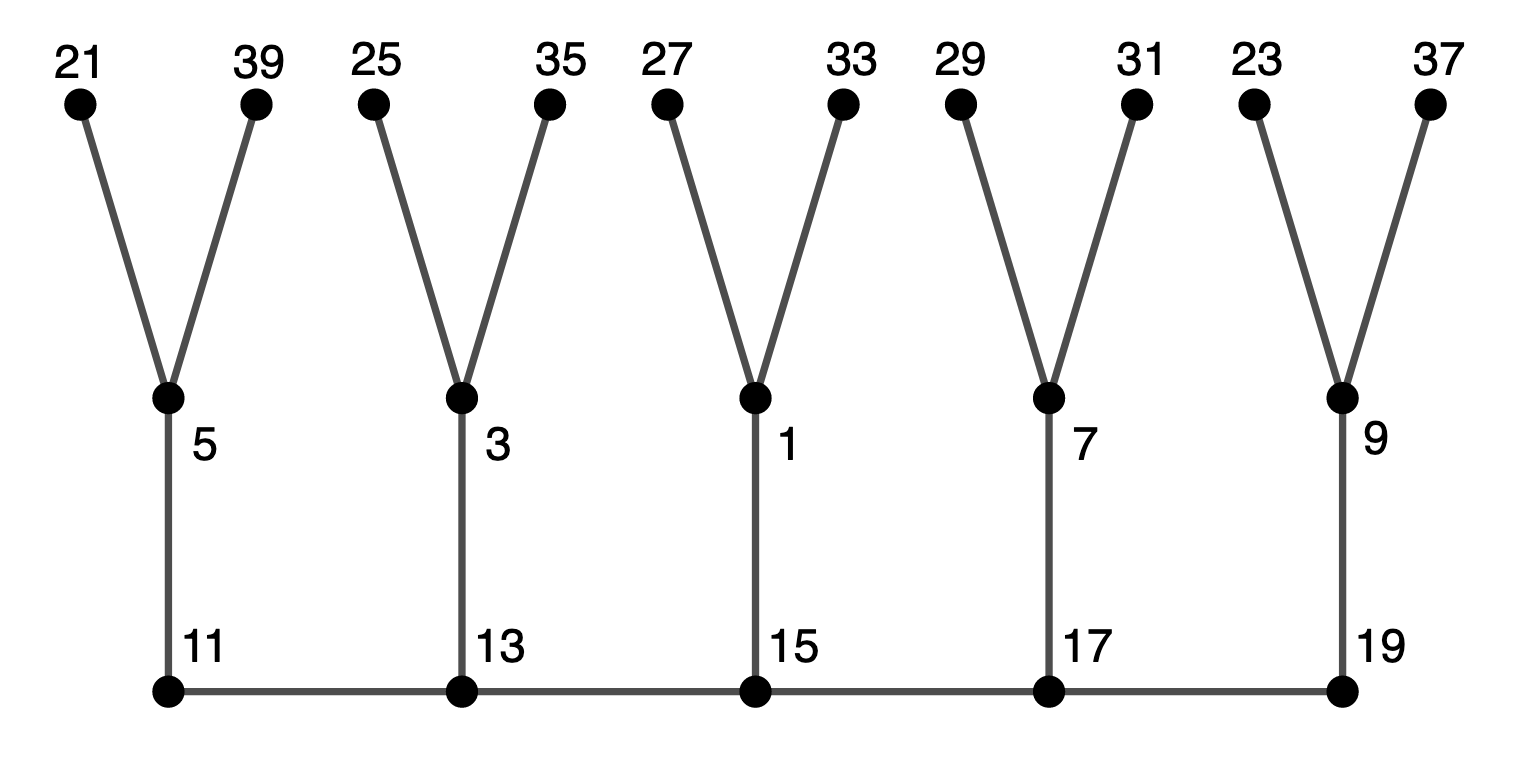}
\caption{An odd prime labeling of a firecracker graph $F_{5,4}$}\label{firecracker} 
\end{center}
\end{figure}

\begin{theorem}
The firecracker graph $F_{n,k}$ is odd prime for all $n>1$ and $k\geq 3$.
\end{theorem}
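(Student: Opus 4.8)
The plan is to exploit the three-level structure of $F_{n,k}$. Write the $n$ spine vertices as $p_1,\ldots,p_n$, let $c_i$ denote the star center joined to $p_i$, and let $x_{i,1},\ldots,x_{i,k-1}$ be the leaves attached to $c_i$. This gives $n(k+1)$ vertices, so the labels form the set $\{1,3,\ldots,2n(k+1)-1\}$, which I would split into $k+1$ consecutive blocks of $n$ odd integers each. The key observation is that the leaves are adjacent only to their centers, so once the center labels are fixed the leaves can be filled in layer by layer using the coprime matching theorem of Robertson and Small (Theorem 3 in~\cite{Robertson_Small}). That theorem matches the first $n$ odd integers $\{1,3,\ldots,2n-1\}$ to an arithmetic progression, so I would reserve exactly the block $\{1,3,\ldots,2n-1\}$ for the centers; this is essentially forced, since making the centers the base set is what lets the Robertson--Small matching drive the leaf assignment.

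First I would label the spine. Assigning $p_i=2nk+2i-1$ uses the top block $T=\{2nk+1,\ldots,2n(k+1)-1\}$ in increasing order, so consecutive spine labels differ by $2$ and are relatively prime by Observation~\ref{obs}(i); no reordering is needed here. Next I would handle the center--spine edges. Since $T$ is an arithmetic progression with common difference $2$, Robertson--Small produces a coprime bijection $h$ from $\{1,3,\ldots,2n-1\}$ onto $T$, and I would assign to $c_i$ the unique base label with $h(c_i)=p_i$, that is $c_i=h^{-1}(p_i)$. This makes $\gcd(p_i,c_i)=1$ automatically, and because $h$ is a bijection the center labels are exactly a permutation of $\{1,3,\ldots,2n-1\}$, which is all that is required, as centers are never adjacent to one another. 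Finally, for each leaf layer $j=1,\ldots,k-1$ I would apply Robertson--Small again to obtain a coprime bijection $h_j$ from $\{1,3,\ldots,2n-1\}$ onto the block $B_j=\{2nj+1,\ldots,2n(j+1)-1\}$ and set $x_{i,j}=h_j(c_i)$, so each leaf is relatively prime to its center.

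A routine check then confirms that $\{1,\ldots,2n-1\}$, $B_1,\ldots,B_{k-1}$, and $T$ partition all the available labels, so the assignment is a bijection onto $\{1,3,\ldots,2n(k+1)-1\}$ and every edge, spine-to-spine, spine-to-center, and center-to-leaf, joins relatively prime labels. In each invocation of Robertson--Small the common difference is $b=2$, whose only prime divisor is $2$, so the hypothesis that $a$ and $b$ share no odd prime divisor below $2n-1$ holds trivially. I expect the main obstacle to be conceptual rather than computational: the centers must simultaneously be coprime to their consecutively labeled spine neighbor and serve as the domain of the Robertson--Small matching for the leaves. Resolving this by pinning the spine to a consecutive block and using the matching in reverse, choosing $c_i$ as the base label paired with the prescribed $p_i$ rather than the other way around, is the crux that keeps the center labels pinned to $\{1,3,\ldots,2n-1\}$ while still eliminating every spine--center conflict.
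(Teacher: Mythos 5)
Your proposal is correct and is essentially the paper's own argument: the paper likewise labels the spine with a consecutive block of odd integers (it uses $\{2n+1,2n+3,\ldots,4n-1\}$ rather than the top block), pins the star centers to $\{1,3,\ldots,2n-1\}$ by applying the Robertson--Small matching in reverse ($\ell(u_i)=h^{-1}(\ell(v_i))$), and then fills each leaf layer with forward matchings $h_j$ exactly as you describe. The only cosmetic differences are which block the spine occupies and your reading of the $(k-1)$-star as having $k-1$ leaves rather than $k-2$ (so your block count differs by one), neither of which affects the validity of the argument.
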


\begin{proof}
Let $v_1,v_2,\ldots, v_n$ be the vertices on the path, $u_1,u_2,\ldots, u_n$ be the centers of the adjoined stars, and $w_{i,1},w_{i,2},\ldots, w_{i,k-2}$ be the leaves from each vertex $u_i$.  We start by labeling the path vertices $v_1$ to $v_n$ with the odd integers from $2n+1$ to $4n-1$, respectively.  We then apply the coprime matching theorem used in Theorem~\ref{t-toed caterpillar} to create the bijection $h: \{1,3,\ldots, 2n-1\}\rightarrow \{2n+1,2n+3,\ldots, 4n-1\}$.  Now use the inverse of $h$ to assign the label for the center of each star as $\ell(u_i)=h^{-1}(\ell(v_i)).$  We next apply the same matching theorem to create the labels for $w_{i,j}$ from $j=1,2,\ldots,k-2$ by using $\ell(w_{i,j})=h_j(\ell(u_i))$ where the function $h_j$ is mapped from $\{1,3,\ldots, 2n-1\}$ to $\{2(j+1)n+1,2(j+1)n+3,\ldots, 2(j+2)n-1\}$. 

The adjacent pairs $v_iv_{i+1}$ have relatively prime labels since they are labelled by consecutive odd integers.  For the remaining adjacent vertices, we have $\gcd(\ell(u_i),\ell(v_i))=1$ and $\gcd(\ell(u_i),\ell(w_{i,j}))=1$ because the inputs and outputs of $h$ and $h_j$ are relatively prime by Theorem 3 in~\cite{Robertson_Small}.  Thus, every firecracker $F_{n,k}$ has an odd prime labeling.
\end{proof}

\section{Powers of graphs}\label{powers}

All of the graphs discussed thus far have been odd prime or at least conjectured to be so.  However, we conclude our investigation of specific classes of graphs with a class involving powers of graphs that is not odd prime for certain cases.  In general we define the $k$th power of a graph $G$, denoted by $G^k$, to have the same vertex set as $G$, but additional edges are included between vertices $u,v\in V(G)$ for which $d(u,v)\leq k$.  By $d(u,v)$, we are referring to the distance between the two vertices, or likewise the length of the shortest path from $u$ to $v$.  

We particularly will examine powers of paths and cycles.  Seoud and Youssef~\cite{S_Y} showed that $P_n^k$ and $C_n^k$ for $k\geq 2$ were not prime with exception of a few small cases of $n$, such as when $n=3$ since these graphs would just be $K_3$.  We will show that $P_n^2$, the square of a path, is odd prime for all $n$; however, the square of a cycle and higher powers of these two graphs won't always be odd prime.  An odd prime labeling of $P_{11}^2$ can be observed in Figure~\ref{path squared}, where the path consists of the diagonal edges while the connections between vertices of distance $2$ are horizontal.

\begin{figure}[t]
\begin{center}
\includegraphics[width=10 cm]{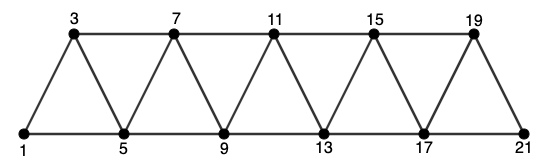}
\caption{An odd prime labeling of $P_{11}^2$}\label{path squared} 
\end{center}
\end{figure}

\begin{theorem}\label{pathsquared}
The square of a path, $P_n^2$, is odd prime for any length $n$. 
\end{theorem}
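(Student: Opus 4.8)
The plan is to use the most direct labeling possible, namely $\ell(v_i)=2i-1$ for $i=1,2,\ldots,n$, which simply places the odd integers $1,3,\ldots,2n-1$ in their natural increasing order along the path. The entire argument then reduces to checking the edge set of $P_n^2$ against this labeling.

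First I would record exactly which pairs of vertices are adjacent. In $P_n$ the distance between $v_i$ and $v_j$ equals $|i-j|$, so in $P_n^2$ the vertices $v_i$ and $v_j$ are joined by an edge precisely when $|i-j|\in\{1,2\}$. These are the only two types of adjacency; in particular there are no wrap-around or longer-range edges.

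Next I would evaluate the label differences for each edge type. For an edge with $|i-j|=1$ the labels differ by $2=2^1$, and for an edge with $|i-j|=2$ they differ by $4=2^2$. In either case the smaller of the two labels is odd, so Observation~\ref{obs}~(i) immediately gives that adjacent labels are relatively prime. Since this covers every edge of $P_n^2$, the map $\ell$ is an odd prime labeling, and no swaps, reorderings, or number-theoretic results such as Bertrand's postulate are required.

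I expect essentially no obstacle here: the crucial feature is that both adjacency types of $P_n^2$ correspond to label gaps that are powers of two, so the natural ordering works verbatim for every $n$. The contrast with $C_n^2$ and the higher powers treated afterward is precisely that a path has no cyclic wrap-around edges; it is exactly such an edge that would otherwise force two labels lying far apart in the natural order to become adjacent and potentially share a common factor, which is what breaks down in the cyclic and higher-power cases.
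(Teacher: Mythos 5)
Your proposal is correct and matches the paper's proof essentially verbatim: the same natural labeling $\ell(v_i)=2i-1$, with adjacency in $P_n^2$ reduced to index differences of $1$ or $2$, hence label differences of $2$ or $4$, settled by Observation~\ref{obs}~(i). No further comment is needed.
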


\begin{proof}
Consider the vertices of the path $P_n$ as $v_1,v_2,\ldots, v_n$ with the additional edges of the form $v_iv_{i+2}$ from $i=1$ to $n-2$ to form $P_n^2$.  We will label the vertices as $\ell(v_i)=2i-1$ for all $1\leq i \leq n$.  Edges in~$P_n$ clearly have relatively prime labels at the endpoints since they are labeled with consecutive odd integers.  Likewise, for edges of the form $v_iv_{i+2}$,
$\gcd(2i-1,2(i+2)-1)=\gcd(2i-1,2i+3)=1$ since the difference of the labels is 4.  Thus, this is an odd prime labeling.
\end{proof}

We were able to successfully label $P_n^2$ with an odd prime labeling because its independence number is $\beta(P_n^2)=\left\lceil \frac{n}{3}\right\rceil$, within the required range given in Lemma~\ref{ind req} to be able to place the multiples of 3 on an independent set of vertices.  However, as discussed in~\cite{S_Y} and~\cite{A_F}, the cycle squared has an independence number of $\beta(C_n^2)=\left\lfloor \frac{n}{3}\right\rfloor$, and for powers of $k\geq 3$, the independence numbers generalize to be the following:
$$\beta(P_n^k)=\left\lceil \frac{n}{k+1}\right\rceil\text{ and }\beta(C_n^k)=\left\lfloor \frac{n}{k+1}\right\rfloor.$$
This will ultimately limit our abilities to develop odd prime labelings of certain cases of $C_n^2$. Nonetheless, specific cases such as $C_7^2$, as seen in Figure~\ref{cycle squared}, can be created, and on the higher powers of certain paths and cycles as seen in the following results.

\begin{figure}[t]
\begin{center}
\includegraphics[width=5 cm]{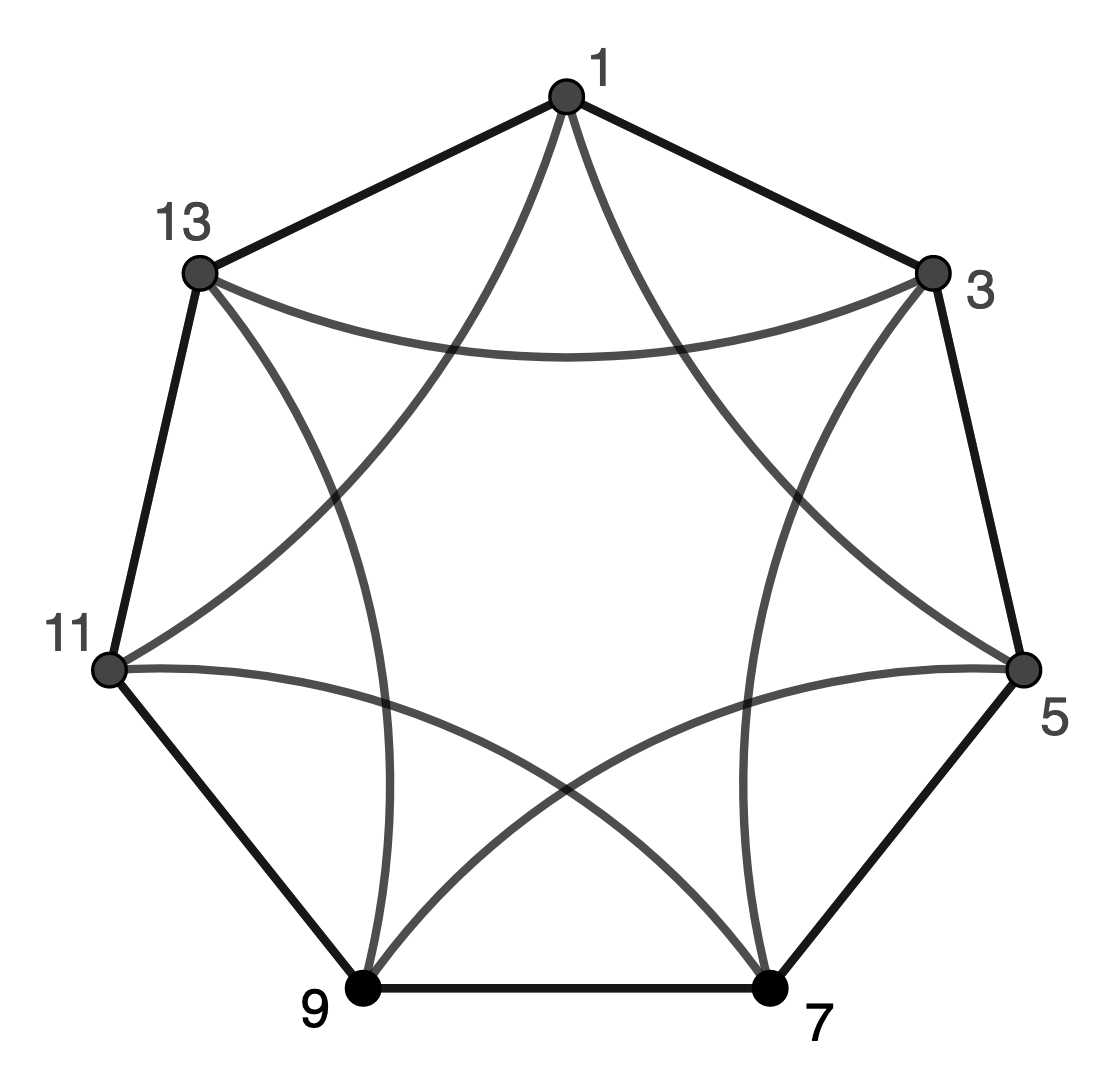}
\caption{An odd prime labeling of the squared cycle $C_7^2$}\label{cycle squared}
\end{center}
\end{figure}

\begin{theorem}
The square of a cycle, $C_n^2$, is odd prime if and only if $n\not\equiv 2\pmod{3}$.
\end{theorem}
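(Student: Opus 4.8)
The plan is to establish the two implications separately, handling the non-existence direction through the independence-number obstruction of Lemma~\ref{ind req} and the existence direction through an explicit labeling. The two halves will turn out to pivot on the same residue condition, which is what makes the characterization clean.

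For the non-existence direction I would argue the contrapositive: if $n\equiv 2\pmod 3$, then $C_n^2$ is not odd prime. The key point is that in $C_n^2$ any three consecutive vertices $v_i,v_{i+1},v_{i+2}$ are mutually adjacent, so an independent set contains at most one vertex out of every three consecutive, giving $\beta(C_n^2)=\lfloor n/3\rfloor$ as already noted in the text. On the other hand, the number of odd multiples of $3$ in $\{1,3,\ldots,2n-1\}$ is $\lfloor\frac{n+1}{3}\rfloor$, and these labels must all be placed on an independent set since any two of them share the factor $3$. When $n\equiv 2\pmod 3$ we have $\beta(C_n^2)=\frac{n-2}{3}<\frac{n+1}{3}=\lfloor\frac{n+1}{3}\rfloor$, so there are too few independent vertices and Lemma~\ref{ind req} is violated, ruling out any odd prime labeling.

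For the existence direction I would use the direct labeling $\ell(v_i)=2i-1$, the same assignment that worked for $P_n^2$ in Theorem~\ref{pathsquared}, and verify only the edges affected by the cyclic wraparound. Consecutive edges $v_iv_{i+1}$ carry consecutive odd integers, and interior distance-$2$ edges $v_iv_{i+2}$ carry labels differing by $4$, so both are relatively prime by Observation~\ref{obs}~(i). The only remaining edges are the three crossing the seam: $v_nv_1$ and $v_{n-1}v_1$ both involve the label $\ell(v_1)=1$ and hence cause no trouble, leaving only $v_nv_2$, whose endpoints satisfy $\gcd(\ell(v_n),\ell(v_2))=\gcd(2n-1,3)$.

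The entire argument thus hinges on this single edge. A one-line computation shows $2n-1\equiv 0\pmod 3$ exactly when $n\equiv 2\pmod 3$; so for $n\equiv 0$ or $1\pmod 3$ we get $\gcd(2n-1,3)=1$ and the labeling $\ell(v_i)=2i-1$ is odd prime, whereas for $n\equiv 2\pmod 3$ the edge $v_nv_2$ forces a common factor of $3$. I expect the only genuine (and modest) obstacle to be arguing that no cleverer labeling can rescue the $n\equiv 2\pmod 3$ case, but this is precisely what the counting argument supplies, so the two directions dovetail: the residue class that breaks the explicit labeling is exactly the one excluded by the independence bound.
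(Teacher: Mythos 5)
Your proposal is correct and follows essentially the same route as the paper: the $n\equiv 2\pmod 3$ case is ruled out by comparing $\beta(C_n^2)=\lfloor n/3\rfloor$ against the bound of Lemma~\ref{ind req}, and the remaining cases use the labeling $\ell(v_i)=2i-1$ from Theorem~\ref{pathsquared}, checking only the wraparound edges $v_{n-1}v_1$, $v_nv_1$, and $v_nv_2$, with the last reducing to $\gcd(2n-1,3)=1$. The only (harmless) addition on your part is spelling out why three consecutive vertices force $\beta(C_n^2)=\lfloor n/3\rfloor$, a fact the paper simply cites.
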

\begin{proof}
For the case of $n\equiv 2\pmod{3}$, we have $n=3k+2$ for some $k\in \mathbb{Z}$.  Then the independence number of $C_n^2$ is 
$$\beta(C_n^2)=\left\lfloor \frac{n}{3}\right\rfloor=\left\lfloor \frac{3k+2}{3}\right\rfloor=k.$$
However, Lemma~\ref{ind req} requires $\beta(G)$ to be at least $\lfloor \frac{n+1}{3}\rfloor=\lfloor \frac{3k+3}{3}\rfloor=k+1$, and hence $C_n^2$ is not odd prime in this case.

For when $n\not\equiv 2\pmod{3}$, we label the vertices of the cycle $v_1,v_2,\ldots, v_n$ as we did for $P_n^2$ in Theorem~\ref{pathsquared} with $\ell(v_i)=2i-1$.  Since this was an odd prime labeling of the square of the path, we only to check the additional edges $v_{n-1}v_1$, $v_nv_1$, and $v_nv_2$.  We have that $\gcd(2n-3,1)$, $\gcd(2n-1,1)$, and $\gcd(2n-1,3)$ are each 1, with the last one being due to our assumption of $n\not\equiv 2\pmod{3}$ forcing $2n-1$ to not be a multiple of 3.  Thus, $C_n^2$ is odd prime in these two cases of $n\equiv 0$ or $1\pmod{3}$.
\end{proof}

When examining higher powers of paths and cycles, the trivial labelings used in the previous two theorems do not work.  However, some small cases for $n$ will result in $P_n^k$ being odd prime once labels are reordered as long as the independence number is large enough to create an odd prime labeling.  One such example is $P_{13}^3$ as shown in Figure~\ref{path cubed}.  The following theorem fully characterizes when $P_n^k$ and $C_n^k$ are odd prime for $k\geq 3$, where we note that only $n\geq k+2$ for paths are considered since the graph would otherwise simply be the complete graph $K_n$.

\begin{figure}[t]
\begin{center}
\includegraphics[width=15 cm]{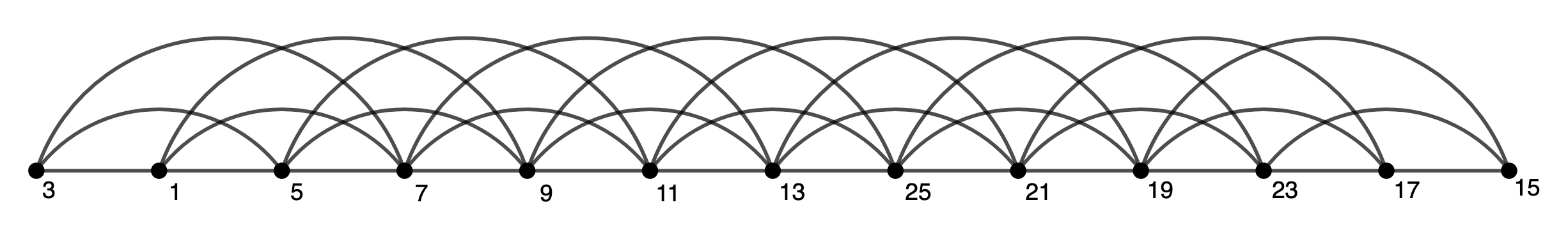}
\caption{An odd prime labeling of $P_{13}^3$}\label{path cubed} 
\end{center}
\end{figure}

\begin{theorem}
The following are true regarding the graphs $P_n^k$ and $C_n^k$
\begin{itemize}
    \item $P_n^3$ is odd prime if and only if $n=5,6,7,9,10,$ or $13$.
    \item $P_n^4$ is odd prime if and only if $n=6$ or $7$.
    \item $P_n^5$ is odd prime if and only if $n=7$.
    \item $P_n^k$ with $k\geq 6$ is not odd prime for any $n\geq k+2$.
    \item $C_n^k$ with $k\geq 3$ is not odd prime for any $n\geq k+2$.
\end{itemize}
\end{theorem}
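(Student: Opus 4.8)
The plan is to split the five claims into two directions handled by entirely different tools. I expect every ``not odd prime'' assertion (the ``only if'' halves of the first three bullets, together with the whole of the last two) to follow purely from the independence-number obstruction of Lemma~\ref{ind req}, while every ``odd prime'' assertion reduces to exhibiting an explicit labeling on one of finitely many small graphs. The unifying observation I would stress is that, for powers of paths and cycles, the surviving values of $n$ are \emph{exactly} those for which Lemma~\ref{ind req} fails to rule out a labeling; there is no gap between the necessary condition and achievability.

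For the negative direction I would substitute the stated independence numbers $\beta(P_n^k)=\lceil n/(k+1)\rceil$ and $\beta(C_n^k)=\lfloor n/(k+1)\rfloor$ into the requirement $\beta(G)\ge\lfloor(n+1)/3\rfloor$ and determine which $n$ survive. For the two ``never'' bullets this is a single uniform estimate: since $k\ge 3$ (resp. $k\ge 6$) forces $k+1\ge 4$ (resp. $k+1\ge 7$), we have $\beta\le\lfloor n/4\rfloor$ (resp. $\beta\le\lceil n/7\rceil$), and a short computation shows $\lfloor n/4\rfloor<\lfloor(n+1)/3\rfloor$ for all $n\ge 5$ and $\lceil n/7\rceil<\lfloor(n+1)/3\rfloor$ for all $n\ge 8$. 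Because $n\ge k+2$ guarantees $n\ge 5$ (resp. $n\ge 8$), the bound is violated in every admissible case, so no odd prime labeling can exist. For $k=3,4,5$ the same substitution pins down the finite surviving sets: one checks that $\lceil n/4\rceil\ge\lfloor(n+1)/3\rfloor$ holds only for $n\in\{5,6,7,9,10,13\}$, that $\lceil n/5\rceil\ge\lfloor(n+1)/3\rfloor$ holds only for $n\in\{6,7\}$, and that $\lceil n/6\rceil\ge\lfloor(n+1)/3\rfloor$ holds only for $n=7$. The sole subtlety is excluding large sporadic $n$, which I would dispatch with a loose-bound inequality; for example, in the $k=3$ case comparing $(n+3)/4$ against $(n-1)/3$ shows the condition fails for every $n\ge 14$, leaving only the finitely many candidates to inspect directly.

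For the positive direction it remains to produce an odd prime labeling on each surviving graph. These are small and can be verified by hand; for instance $P_7^5$ is $K_7$ with the single edge $v_1v_7$ deleted, so placing the two multiples of three, $3$ and $9$, on the unique non-adjacent pair $v_1,v_7$ and the pairwise-coprime labels $1,5,7,11,13$ on the rest yields a valid labeling. The remaining cases I would treat by the same guiding principle: place the $\lfloor(n+1)/3\rfloor$ multiples of $3$ on a maximum independent set, then arrange the remaining labels so that no two vertices within distance $k$ share a factor of $5$, $7$, or larger. The genuinely tight instance is $P_{13}^3$, where the four multiples of three $3,9,15,21$ must occupy the \emph{unique} maximum independent set $\{v_1,v_5,v_9,v_{13}\}$ (any four vertices pairwise at distance $\ge 4$ in $\{1,\dots,13\}$ force gaps of exactly $4$); a reordering of the naive assignment $\ell(v_i)=2i-1$ then places the non-multiples of three as shown in Figure~\ref{path cubed}.

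The main obstacle is precisely this positive direction for the borderline-dense cases, above all $P_{13}^3$, where $\beta=4$ meets the required bound with no slack and the graph is connected enough that the placement of the multiples of $3$ is forced; after that placement one must still verify that the higher primes $5,7,11$ are avoided on every short-distance pair. The negative direction, by contrast, is essentially mechanical once the independence inequalities are organized, the only care being the uniform treatment of all large $n$ so that each characterization is provably finite.
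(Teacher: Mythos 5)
Your proposal is correct and follows essentially the same route as the paper: the negative assertions all come from comparing $\beta(P_n^k)=\lceil n/(k+1)\rceil$ and $\beta(C_n^k)=\lfloor n/(k+1)\rfloor$ against the bound $\lfloor (n+1)/3\rfloor$ of Lemma~\ref{ind req}, and the positive assertions are settled by explicit labelings of the finitely many surviving graphs (the paper simply lists all nine label sequences, e.g.\ $3,1,5,7,9,11,13,25,21,19,23,17,15$ for $P_{13}^3$, and checks that labels sharing a factor of $3$, $5$, or $7$ sit at distance at least $k+1$). Your independence-number computations, your uniform bounds for the two ``never'' bullets, and your forced-placement analysis for $P_{13}^3$ and $P_7^5$ all agree with, and in places slightly sharpen, the paper's argument (the paper even has a typo writing $\lceil n/4\rceil$ for $\beta(P_n^4)$ where you correctly use $\lceil n/5\rceil$).
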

\begin{proof}
Since we have $\displaystyle\beta(P_n^k)=\left\lceil \frac{n}{k+1}\right\rceil$ as the independence number of $P_n^k$, the inequality $$\beta(P_n^3)=\left\lceil \frac{n}{4}\right\rceil<\left\lfloor \frac{n+1}{3}\right\rfloor$$ 
can be verified for $n=8, 11, 12$, or when $n\geq 14$. Similarly, 
$$\beta(P_n^4)=\left\lceil \frac{n}{4}\right\rceil<\left\lfloor \frac{n+1}{3}\right\rfloor$$ 
for all $n\geq 8$ and 
$$\beta(P_n^5)=\left\lceil \frac{n}{6}\right\rceil<\left\lfloor \frac{n+1}{3}\right\rfloor$$ 
for all $n\geq 8$ are true as well.  Finally, we have $\displaystyle\beta(C_n^k)=\left\lfloor \frac{n}{k+1}\right\rfloor<\left\lfloor \frac{n+1}{3}\right\rfloor$ for all $n\geq 5$ and $k\geq 3$.  By Lemma~\ref{ind req}, these inequalities justify each of the non-odd prime cases in the statement.

For the cases where these graphs are odd prime, we use the following sequences to label the vertices of the path:
\begin{multicols}{2}
\begin{itemize}
    \item $P_5^3$: $3,1,5,7,9$
    \item $P_6^3$: $3,1,5,7,9,11$
    \item $P_7^3$: $3,1,5,7,9,11,13$
    \item $P_9^3$: $3,1,5,7,9,11,13,17,15$
    \item $P_{10}^3$: $3,1,5,7,9,11,13,17,15,19$
    \item $P_{13}^3$: $3,1,5,7,9,11,13,25,21,19,23,17,15$
    \item $P_6^4$: $3,1,5,7,11,9$
    \item $P_7^4$: $3,1,5,7,11,9,13$
    \item $P_7^5$: $3,1,5,7,11,13,9$.
\end{itemize}
\end{multicols}
One can see that any labels that share factors of $3$, $5$, or $7$ are at least distance $k+1$ apart on the path, making them odd prime labelings.
\end{proof}

\section{Prime vs. Odd Prime Graphs}\label{pvop}

Results such as Theorems~\ref{unioncycles} (when considering odd-length cycles), \ref{prismgraphs} (also for odd-length prisms), \ref{triangular}, \ref{pentagonal}, and~\ref{pathsquared} show that many classes of graphs are not prime, but have an odd prime labeling.  On the other hand, no examples of graphs have been found to be prime, but not have an odd prime labeling.  Our expectation is that no such graphs exist, as summarized for the following conjecture, first introduced as Conjecture 2.14 in~\cite{P_S} and Conjecture $1$ in~\cite{Y_A}.  

\begin{conjecture}\label{prime-to-odd-prime}
Every prime graph is an odd prime.
\end{conjecture}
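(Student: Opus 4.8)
The plan is to start from a given prime labeling $f\colon V(G)\to\{1,2,\ldots,n\}$ and manufacture an odd prime labeling from it, rather than building one from scratch. The first useful observation is that the necessary condition of Lemma~\ref{ind req} comes for free: since any two even integers share the factor $2$, the vertices receiving even labels under $f$ form an independent set of size $\lfloor n/2\rfloor$, and one checks that $\lfloor n/2\rfloor\ge\lfloor (n+1)/3\rfloor$ for every $n$. Thus a prime graph always has enough room to host the multiples of $3$, which is precisely what the odd labeling set demands. The natural temptation is to transport $f$ through the bijection $k\mapsto 2k-1$, but this fails to preserve coprimality: for instance $\gcd(2,5)=1$ while $\gcd(3,9)=3$, so the image of a legal edge can become illegal. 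Since the multiplicative structures of $\{1,\ldots,n\}$ and $\{1,3,\ldots,2n-1\}$ genuinely differ, no single universal relabeling can work, and the assignment must be chosen using the adjacency structure of $G$.

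The approach I would pursue is a systems-of-distinct-representatives argument in the spirit of the coprime matching theorem of Robertson and Small~\cite{Robertson_Small} already exploited above. Reformulating the goal, an odd prime labeling is an injection $\ell\colon V(G)\to\{1,3,\ldots,2n-1\}$ such that for every prime $p$ the set $\ell^{-1}(p\mathbb{Z})$ is independent in $G$. Only the small odd primes impose genuine constraints, because a prime $p$ with $3p>2n-1$ has at most one odd multiple in range and so can never be a shared factor of two adjacent labels. The strategy is therefore to seat the labels divisible by $3$ on an independent set, then those divisible by $5$, by $7$, and so on, exploiting the fact that the number of labels divisible by $p$ is only about $n/p$ and so is comfortably smaller than the independent set $\lfloor n/2\rfloor$ supplied by $f$; the remaining vertices are then filled with the labels having no small prime factor, which are automatically safe against every neighbor. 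Phrasing the existence of such a placement as a Hall-type condition, and building the requisite independent sets from the combinatorial data of $f$, is the mechanism I would try to push through.

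The hard part will be that the constraints for different primes are \emph{correlated} and must be satisfied \emph{simultaneously}, not greedily. A prime labeling hands us a single large independent set (the even-labeled vertices), whereas an odd prime labeling effectively requires a whole nested family of independent sets, one per small prime, and these cannot be merged: a multiple of $3$ and a multiple of $5$ may be adjacent, so the class for $5$ is a genuinely different independent set than the class for $3$. Worse, labels such as $15$, $45$, and $105$ belong to several divisibility classes at once, so the independent sets for $3,5,7,\ldots$ overlap in a constrained way and must be carved out in concert. Verifying Hall's condition for this simultaneous placement, or equivalently extracting from the one guaranteed independent set a compatible family of independent sets realizing all the small-prime constraints over an arbitrary prime graph, is where the difficulty concentrates and is, I expect, the essential gap between the prime and odd prime conditions that keeps the conjecture open.
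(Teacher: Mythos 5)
This statement is a conjecture, and the paper does not prove it either; its ``proof'' content here is partial progress, so the fair comparison is between your strategy and the paper's. As a proof, your proposal fails at exactly the step you flag yourself: nothing in your argument actually extracts, from the single independent set of even-labeled vertices that a prime labeling provides, the overlapping family of independent sets needed to host the multiples of $3$, $5$, $7$, \ldots{} simultaneously. Labels such as $15$ and $45$ lie in several divisibility classes at once, so the classes cannot be seated one prime at a time, and no Hall-type condition is ever verified. Your opening observation (that $\lfloor n/2\rfloor\ge\lfloor (n+1)/3\rfloor$, so the necessary condition of Lemma~\ref{ind req} is automatic for prime graphs) is correct but carries no real weight, since Lemma~\ref{ind req} is only a necessary condition. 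So the proposal is an unfinished program, not a proof --- which you acknowledge.

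Beyond the admitted gap, one of your structural claims is wrong, and it steers you away from the paper's actual line of attack. You argue that because $k\mapsto 2k-1$ fails to preserve coprimality, ``no single universal relabeling can work, and the assignment must be chosen using the adjacency structure of $G$.'' The failure of one particular bijection rules out nothing, and the property required of a universal relabeling is only one-directional: a map $\ell\colon\{1,\ldots,n\}\to\{1,3,\ldots,2n-1\}$ with $\gcd(i,j)=1\Rightarrow\gcd(\ell(i),\ell(j))=1$; it need not transport the multiplicative structure faithfully. This is precisely the paper's approach: such an $\ell$ is the same thing as an odd prime labeling of the maximal prime graph $R_n$ of Seoud and Youssef~\cite{S_Y}, and since every prime graph of order $n$ is a spanning subgraph of $R_n$, a single universal $\ell$ for each $n$ settles the conjecture at that order with no reference to the adjacency structure of the individual graph (by Lemma~\ref{subgraph}). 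The paper constructs such maps heuristically by shifting each prime factor of $i$ up to the next prime (e.g.\ $12=2^2\cdot 3\mapsto 3^2\cdot 5=45$) and verifies them by computer for all $n\le 50$ (Table~\ref{table}), which is its stated progress toward the conjecture. So your route --- a per-graph coprime-matching argument in the spirit of Robertson and Small --- is genuinely different from the paper's universal-relabeling route, but the claim with which you dismissed the latter is unsupported, and it is the latter that has so far produced results.
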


One potential avenue to prove this conjecture involves the maximal prime graph, $R_n$, defined in~\cite{S_Y} as the graph consisting of vertices $v_1,v_2,\ldots, v_n$ where $v_iv_j$ is an edge if and only if $\gcd(i,j)=1$.
Since all prime graphs of order $n$ are isomorphic to a spanning subgraph of $R_n$, Conjecture~\ref{prime-to-odd-prime} would be proven true if one could develop an odd prime labeling of $R_n$ for any $n$.  Namely, we need a function $\ell: \{v_1,v_2,\ldots, v_n\} \rightarrow \{1, 3, \ldots, 2n-1\}$ for any $n\geq 1$ such that for all $i,j\in \{1,2,\ldots, n\}$, if $\gcd(i,j)=1$, then $\gcd(\ell(v_i),\ell(v_j))=1$.  This function could then be used convert a prime labeling of any prime graph to an odd prime labeling.  

\begin{figure}[t]
\begin{center}
\includegraphics[width=7.5 cm]{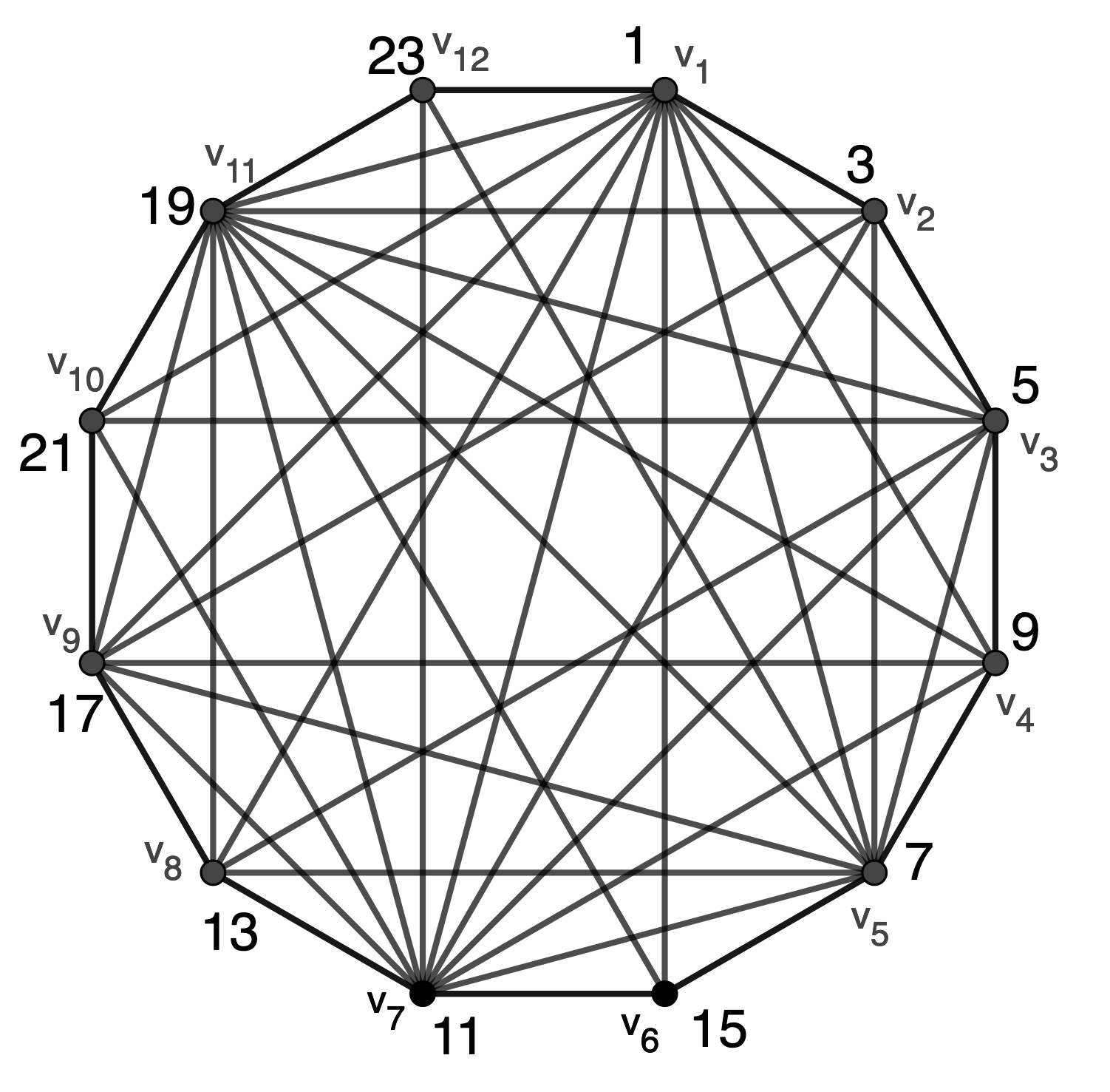}
\caption{An odd prime labeling of $R_{12}$}\label{maxprime} 
\end{center}
\end{figure}

We have been able to find general function for all $n$ still, but we conclude this paper by showing the progress that has been made in developing one.  In order to maintain the relatively prime condition on adjacent labels, our mapping is based on shifting each of the prime divisors of an input to the next largest prime.  As an example, for $v_{12}$ where $12=2\cdot2\cdot 3$, we assign $\ell(v_{20})=3\cdot3\cdot 5=45$, provided that $n$ is largest enough for $2n-1\geq 45$.  When $n$ is not large enough for $45$ to be a usable label, one of the available prime numbers between $n$ and $2n-1$ is usually assigned.  An example of one of these labeling functions being applied to the maximal prime graph $R_{12}$ is shown in Figure~\ref{maxprime}.

\begin{theorem}
All prime graphs of order $n\leq 50$ are odd prime.
\end{theorem}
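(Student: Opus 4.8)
The plan is to lean on the reduction already assembled in the text. Since every prime graph of order $n$ is isomorphic to a spanning subgraph of the maximal prime graph $R_n$, Lemma~\ref{subgraph} reduces the theorem to producing an odd prime labeling of $R_n$ for each $n$ with $1\le n\le 50$. Concretely, a labeling $\ell$ of $R_n$ is odd prime exactly when it is a bijection from $\{v_1,\ldots,v_n\}$ onto $\{1,3,\ldots,2n-1\}$ satisfying the implication $\gcd(i,j)=1\Rightarrow\gcd(\ell(v_i),\ell(v_j))=1$, so the whole problem collapses to constructing fifty such bijections.

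First I would make the prime-shift idea precise. Let $\sigma$ be the increasing bijection on the primes sending the $r$th prime to the $(r+1)$th (so $2\mapsto 3$, $3\mapsto 5$, $5\mapsto 7$, and so on), and extend it multiplicatively to a map $\Phi$ on positive integers by $\Phi\bigl(\prod p_a^{e_a}\bigr)=\prod \sigma(p_a)^{e_a}$ with $\Phi(1)=1$. The decisive observation is that $\Phi$ preserves coprimality in both directions: because $\sigma$ is injective on primes, the prime support of $\Phi(i)$ is the image under $\sigma$ of the support of $i$, so those supports are disjoint precisely when the supports of $i$ and $j$ are, giving $\gcd(i,j)=1$ if and only if $\gcd(\Phi(i),\Phi(j))=1$. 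Moreover $\Phi(i)$ is always odd, since every prime is sent to an odd prime. Hence on the set of indices $i$ with $\Phi(i)\le 2n-1$, the assignment $\ell(v_i)=\Phi(i)$ already meets the coprimality requirement, and $\Phi$ is injective by unique factorization.

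The hard part is upgrading this partial, range-violating assignment to a genuine bijection onto all of $\{1,3,\ldots,2n-1\}$. Some indices $i$ (those with large prime factors or high multiplicity) have $\Phi(i)>2n-1$ and must be redirected, while certain odd labels below $2n-1$ are never hit by $\Phi$ and must be spent. The key enabling fact is that the primes in the interval $(n,2n-1]$, guaranteed to exist by Bertrand's postulate, are coprime to \emph{every} other label in $\{1,3,\ldots,2n-1\}$, since any proper multiple of such a prime would be at least $2p>2n-1$ and thus out of range; attaching these prime labels to the out-of-range vertices therefore never creates a violation, regardless of placement. The genuine obstacle is that the remaining unused labels need not all be such primes, so one must verify that the out-of-range indices can be matched to the unused labels in a way that is simultaneously injective, surjective, and coprimality-preserving across all coprime pairs at once—a finite but fussy bookkeeping problem that explains why the result is stated only up to $n=50$. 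Because $1\le n\le 50$ is finite, I would close the argument by a direct case check: most $n$ handled uniformly by $\Phi$ together with the prime pool, and the remaining exceptional $n$ dispatched by an explicit tabulated labeling (as illustrated for $R_{12}$ in Figure~\ref{maxprime}), confirming in each case that every edge of $R_n$ joins vertices with coprime labels, whence every prime graph of order $n\le 50$ is odd prime by Lemma~\ref{subgraph}.
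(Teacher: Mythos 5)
Your outline reproduces the paper's strategy --- reduce to the maximal prime graph $R_n$, label $v_i$ by shifting each prime factor of $i$ to the next larger prime, and patch failures using the primes in $(n,2n-1]$ guaranteed by Bertrand's postulate --- but it stops exactly where the paper's proof begins. For a finite existence statement like this one, the substance of the proof \emph{is} the fifty explicit labelings: the paper's proof consists of a table giving a labeling sequence for every $n\le 50$ together with a computer verification that $\gcd(i,j)=1$ implies $\gcd(\ell(v_i),\ell(v_j))=1$ in each case. You correctly formalize the shift map $\Phi$, prove it preserves coprimality, and observe that primes in $(n,2n-1]$ are safe labels, but you never exhibit the matchings whose existence is being claimed; declaring the remainder ``a finite but fussy bookkeeping problem'' and leaving it unperformed is a gap, not a proof.

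Moreover, that bookkeeping cannot be waved off as routine, because the uniform recipe you hope covers ``most $n$'' demonstrably fails inside the stated range. Take $n=50$, so the labels are $1,3,\ldots,99$. The label $91=7\cdot 13$ lies outside the image of $\Phi$ on $\{1,\ldots,50\}$ (its unique preimage is $55$), so it must be spent on some index $i$ with $\Phi(i)>99$; these indices are $24,27,30,32,36,40,42,44,45,48,49,50$. Under your pure scheme, $13=\Phi(11)$ sits at $v_{11}$, so the vertex receiving $91$ must satisfy $11\mid i$, forcing $i=44$; but $\gcd(44,5)=1$ while $\ell(v_5)=\Phi(5)=7$, and $7\mid 91$, a violation. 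Hence no out-of-range vertex can legally receive $91$ unless the in-range assignments are also rearranged --- which is exactly what the paper's table does for $n=50$ (it places $13$ at $v_{50}$ and $91$ at $v_{30}$). So the ``exceptional'' cases requiring ad hoc global reshuffling are not rare stragglers; they include the largest cases of the theorem, and producing and checking a valid table for each $n\le 50$ is unavoidable. Your proposal identifies the right heuristic but defers the entire content of the theorem.
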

\begin{proof}
To prove this, we introduce a set of functions that would assign an odd prime labeling to the vertices of the graph $R_n$ with $n\leq 50$.  Given a particular order $n$, Table~\ref{table} provides the sequence of labels used on the vertices $v_1,v_2,\ldots, v_n$ of $R_n$. Use the row given by the set in the first column which contains the given $n$, and then the first $n$ values in that row (or two rows) will be an odd prime labeling $\ell$ of $R_n$.  A Mathematica program has been used to verify the relatively prime condition in which for each $i,j\in\{1,2,\ldots, n\}$ with $\gcd(i,j)=1$, we have $\gcd(\ell(i),\ell(j))=1$ as well.  
\end{proof}

\begin{table}
\addtolength{\tabcolsep}{-2pt}
\begin{tabular}{|c | c | c | c | c | c | c | c | c | c | c | c | c | c | c | c | c | c | c | c | c | c | c | c | c | c | }
 \hline
 $n$ & \multicolumn{24}{l}{Sequence for $\ell(v_1),\ell(v_2),\ldots, \ell(v_n)$} & \\ \hline
$[1,4]$ & 1 & 3 & 5 & 7 & \multicolumn{20}{l}{} & \\ \hline
$[5,7]$ & 1 & 3 & 5 & 9 & 7 & 11 & 13 & \multicolumn{17}{l}{} & \\ \hline
$[8,10]$ & 1 & 3 & 5 & 9 & 7 & 15 & 11 & 13 & 17 & 19 & \multicolumn{14}{l}{} & \\ \hline
$[11,12]$ & 1 & 3 & 5 & 9 & 7 & 15 & 11 & 13 & 17 & 21 & 19 & 23 &  \multicolumn{12}{l}{} & \\ \hline
$[13,16]$ & 1 & 3 & 5 & 9 & 7 & 15 & 11 & 13 & 25 & 21 & 17 & 23 & 19 & 27 & 29 & 31 &  \multicolumn{8}{l}{} & \\ \hline
$\{17\}$ & 1 & 3 & 5 & 9 & 7 & 15 & 11 & 27 & 25 & 21 & 13 & 17 & 19 & 33 & 23 & 29 & 31 &  \multicolumn{7}{l}{} & \\ \hline
$[18,22]$ & 1 & 3 & 5 & 9 & 7 & 15 & 11 & 27 & 25 & 21 & 17 & 13 & 19 & 33 & 35 & 23 & 29 & 31 & 37 & 39 & 41 & 43&  \multicolumn{2}{l}{} & \\ \hline
\multirow{2}{*}{$[23, 27]$} & 1 & 3 & 5 & 9 & 7 & 15 & 11 & 27 & 25 & 21 & 13 & 45 & 17 & 33 & 35 & 19 & 23 & 29 & 31 & 37 & 41 & 39 & 43 & 47 & 49 \\ \cline{2-26}
 & 51 & 53& \multicolumn{22}{l}{} & \\ \hline
\multirow{2}{*}{$\{28\}$} & 1 & 3 & 5 & 9 & 7 & 15 & 11 & 27 & 25 & 21 & 13 & 45 & 17 & 33 & 35 & 19 & 23 & 29 & 31 & 41 & 55 & 39 & 37 & 43 & 49 \\ \cline{2-26}
 & 51 & 47 & 53& \multicolumn{21}{l}{} & \\ \hline
\multirow{2}{*}{$[29, 31]$} & 1 & 3 & 5 & 9 & 7 & 15 & 11 & 27 & 25 & 21 & 13 & 45 & 17 & 33 & 35 & 57 & 23 & 19 & 29 & 37 & 55 & 39 & 31 & 41 & 49 \\ \cline{2-26}
 & 51 & 43 & 47 & 53 & 59 & 61& \multicolumn{18}{l}{} & \\ \hline
\multirow{2}{*}{$[32, 33]$} & 1 & 3 & 5 & 9 & 7 & 15 & 11 & 27 & 25 & 21 & 13 & 45 & 17 & 33 & 35 & 57 & 23 & 19 & 29 & 63 & 55 & 39 & 31 & 37 & 49 \\ \cline{2-26}
 & 51 & 41 & 43 & 47 & 53 & 59 & 61 & 65& \multicolumn{16}{l}{} & \\ \hline
\multirow{2}{*}{$\{34\}$} & 1 & 3 & 5 & 9 & 7 & 15 & 11 & 27 & 25 & 21 & 13 & 45 & 17 & 33 & 35 & 23 & 19 & 29 & 31 & 63 & 55 & 39 & 37 & 41 & 49 \\ \cline{2-26}
 & 51 & 43 & 47 & 53 & 59 & 61 & 67 & 65 & 57& \multicolumn{15}{l}{} & \\ \hline
\multirow{2}{*}{$[35, 37]$} & 1 & 3 & 5 & 9 & 7 & 15 & 11 & 27 & 25 & 21 & 13 & 45 & 17 & 33 & 35 & 23 & 19 & 29 & 31 & 63 & 55 & 39 & 37 & 41 & 49 \\ \cline{2-26}
& 51 & 43 & 47 & 53 & 59 & 61 & 69 & 65 & 57 & 67 & 71 & 73& \multicolumn{12}{l}{} & \\ \hline
\multirow{2}{*}{$\{38\}$} & 1 & 3 & 5 & 9 & 7 & 15 & 11 & 27 & 25 & 21 & 13 & 45 & 17 & 33 & 35 & 23 & 19 & 75 & 29 & 63 & 55 & 39 & 31 & 37 & 49 \\ \cline{2-26}
& 51 & 41 & 43 & 47 & 53 & 59 & 69 & 65 & 57 & 61 & 67 & 71 & 73& \multicolumn{11}{l}{} & \\ \hline
\multirow{2}{*}{$[39, 40]$} & 1 & 3 & 5 & 9 & 7 & 15 & 11 & 27 & 25 & 21 & 13 & 45 & 17 & 33 & 35 & 23 & 19 & 75 & 29 & 63 & 55 & 39 & 31 & 37 & 49 \\ \cline{2-26}
& 51 & 41 & 43 & 47 & 53 & 59 & 69 & 65 & 57 & 77 & 61 & 67 & 71 & 73 & 79& \multicolumn{9}{l}{} & \\ \hline
\multirow{2}{*}{$[41, 42]$} & 1 & 3 & 5 & 9 & 7 & 15 & 11 & 27 & 25 & 21 & 13 & 45 & 17 & 33 & 35 & 81 & 19 & 75 & 23 & 63 & 55 & 39 & 29 & 31 & 49 \\ \cline{2-26}
& 51 & 37 & 41 & 43 & 47 & 53 & 59 & 65 & 57 & 77 & 61 & 67 & 69 & 71 & 73 & 79 & 83& \multicolumn{7}{l}{} & \\ \hline
\multirow{2}{*}{$\{43\}$} & 1 & 3 & 5 & 9 & 7 & 15 & 11 & 27 & 25 & 21 & 13 & 45 & 17 & 33 & 35 & 81 & 19 & 75 & 23 & 63 & 55 & 39 & 29 & 31 & 49 \\ \cline{2-26}
& 51 & 37 & 41 & 43 & 47 & 53 & 59 & 65 & 57 & 77 & 61 & 67 & 69 & 85 & 71 & 73 & 79 & 83& \multicolumn{6}{l}{} & \\ \hline
\multirow{2}{*}{$[44, 45]$} & 1 & 3 & 5 & 9 & 7 & 15 & 11 & 27 & 25 & 21 & 13 & 45 & 17 & 33 & 35 & 81 & 19 & 75 & 23 & 63 & 55 & 39 & 31 & 29 & 49 \\ \cline{2-26}
& 51 & 37 & 41 & 43 & 47 & 53 & 59 & 65 & 57 & 77 & 61 & 67 & 69 & 85 & 71 & 73 & 79 & 83 & 87 & 89& \multicolumn{4}{l}{} & \\ \hline
\multirow{2}{*}{$\{46\}$} & 1 & 3 & 5 & 9 & 7 & 15 & 11 & 27 & 25 & 21 & 29 & 45 & 17 & 33 & 35 & 81 & 19 & 75 & 23 & 63 & 55 & 39 & 31 & 65 & 49 \\ \cline{2-26}
& 51 & 37 & 13 & 41 & 91 & 43 & 47 & 53 & 57 & 77 & 59 & 61 & 69 & 85 & 67 & 71 & 73 & 79 & 87 & 83 & 89& \multicolumn{3}{l}{} & \\ \hline
\multirow{2}{*}{$\{47\}$} & 1 & 3 & 5 & 9 & 7 & 15 & 11 & 27 & 25 & 21 & 29 & 45 & 17 & 33 & 35 & 81 & 19 & 75 & 23 & 63 & 55 & 39 & 31 & 65 & 49 \\ \cline{2-26}
& 51 & 37 & 13 & 41 & 91 & 43 & 47 & 53 & 57 & 77 & 59 & 61 & 69 & 85 & 67 & 71 & 73 & 79 & 87 & 83 & 93 & 89& \multicolumn{2}{l}{} & \\ \hline
\multirow{2}{*}{$[48, 49]$} & 1 & 3 & 5 & 9 & 7 & 15 & 11 & 27 & 25 & 21 & 29 & 45 & 17 & 33 & 35 & 81 & 59 & 75 & 23 & 63 & 55 & 39 & 31 & 65 & 49 \\ \cline{2-26}
& 51 & 37 & 13 & 41 & 91 & 43 & 47 & 19 & 53 & 77 & 57 & 61 & 69 & 85 & 67 & 71 & 73 & 79 & 87 & 83 & 93 & 89 & 95 & 97&   \\ \hline
\multirow{2}{*}{$\{50\}$} & 1 & 3 & 5 & 9 & 7 & 15 & 11 & 27 & 25 & 21 & 29 & 45 & 17 & 33 & 35 & 81 & 59 & 75 & 23 & 63 & 55 & 39 & 31 & 65 & 49 \\ \cline{2-26}
 & 51 & 37 & 99 & 41 & 91 & 43 & 47 & 19 & 53 & 77 & 57 & 61 & 69 & 85 & 67 & 71 & 73 & 79 & 87 & 83 & 93 & 89 & 95 & 97 & 13 \\ \hline
\end{tabular}
\caption{A table containing the sequence of labels used for an odd prime labeling of $R_n$ for $n\leq 50$}
\label{table}
\end{table}

\section*{Acknowledgements}
The authors are grateful for the support of the Austin Peay State University Department of Mathematics and Statistics.  They also thank the referees for their comments and suggestions.

\newcommand{\journal}[6]{{#1,} {#2}, {\it #3} {\bf #4} (#5) #6.}
\newcommand{\dissertation}[4]{{#1,} #2, {\it #3,} #4}
\newcommand{\book}[5]{{#1,} {\it #2,} #3, #4.}

\end{document}